\RequirePackage[hyphens]{url}
\documentclass[11pt]{article}

\usepackage{endnotes}
\let\footnote=\endnote

\usepackage{tikz}
\usepackage{pgfplots}
\pgfplotsset{compat=1.18}
\usepgfplotslibrary{groupplots}
\usepgfplotslibrary{patchplots}
\usepgfplotslibrary{colorbrewer}
\usetikzlibrary{patterns}
\usepackage{ninecolors}
\usepackage{booktabs}
\usepackage{multirow}
\usepackage[export]{adjustbox}
\usepackage[symbol]{footmisc}

\usepgfplotslibrary{external}
\tikzset{external/system call={lualatex \tikzexternalcheckshellescape -halt-on-error -interaction=batchmode -jobname "\image" "\texsource"}}

\PassOptionsToPackage{hyphens}{url}
\usepackage{url,hyperref,subcaption}
\usepackage{multirow}
\usepackage{mathtools}
\usepackage{enumitem}

\newcommand\norm[1]{\left\lVert#1\right\rVert}
\newcommand\tp{^\top}
\newcommand{\lBrack}{[\![}
\newcommand{\rBrack}{]\!]}
\newcommand \Brack[1]{\lBrack #1 \rBrack}
\newcommand \ip[2]{\langle #1, #2 \rangle}

\newcommand{\eg}{e.g.}
\newcommand{\ie}{i.e.}

\DeclareMathOperator{\conv}{conv}
\DeclareMathOperator{\tr}{tr}

\usepackage{natbib}
 \bibpunct[, ]{(}{)}{,}{a}{}{,}

\usepackage{multibib}
\newcites{appendix}{References}

\usepackage{amsthm,amssymb}
\usepackage{setspace}
\usepackage[margin=0.8in]{geometry}
\usepackage{apptools}

\newtheorem{theorem}{Theorem}
\newtheorem{lemma}{Lemma}
\newtheorem{proposition}{Proposition}
\newtheorem{corollary}{Corollary}

\theoremstyle{definition}
\newtheorem{assumption}{Assumption}

\AtAppendix{\counterwithin{lemma}{section}}
\AtAppendix{\counterwithin{proposition}{section}}
\AtAppendix{\counterwithin{corollary}{section}}
\AtAppendix{\counterwithin{figure}{section}}
\AtAppendix{\counterwithin{table}{section}}
\AtAppendix{\counterwithin{equation}{section}}
\hypersetup{
           breaklinks=true,
           colorlinks=false, 
           pdfusetitle=true,
           hypertexnames=false,
        }
        
\title{Modeling Adversarial Wildfires for Power Grid Disruption}
\author{Matthew Brun\footnotemark[1] \and Xu Andy Sun\footnotemark[2] \footnotemark[4] \and Jean-Paul Watson\footnotemark[3]}
\date{\vspace{-3em}}

\begin{document}

\maketitle
\footnotetext[1]{Operations Research Center, Massachusetts Institute of Technology, Cambridge, MA (\href{mailto:brunm@mit.edu}{brunm@mit.edu}).}
\footnotetext[2]{Sloan School of Management, Massachusetts Institute of Technology, Cambridge, MA (\href{mailto:sunx@mit.edu}{sunx@mit.edu}).}
\footnotetext[3]{Lawrence Livermore National Laboratory, Livermore, CA (\href{mailto:watson61@llnl.gov}{watson61@llnl.gov}).}
\footnotetext[4]{Corresponding author.}

%%Abstract
\begin{abstract}
Electric power infrastructure faces increasing risk of damage and disruption due to wildfire.  Operators of power grids in wildfire-prone regions must consider the potential impacts of unpredictable fires.  However, traditional wildfire models do not effectively describe worst-case, or even high-impact, fire behavior.  To address this issue, we propose a mixed-integer conic program to characterize an adversarial wildfire that targets infrastructure while respecting realistic fire spread dynamics.  We design a wind-assisted fire spread set based on the Rothermel fire spread model and propose principled convex relaxations of this set, including a new relaxation of the inner product over Euclidean balls.  We present test cases derived from the recent Park, Eaton, and Palisades fires in California and solve models to identify the minimum time-to-outage of multiple-element contingencies and the maximum load shed associated with a sequence of element outages caused by a realistic wildfire.  We use the minimum time-to-outage values to screen contingencies and construct security-constrained optimal power flow models that promote operational resilience against wildfire.
\end{abstract}

\textit{Key words:} wildfire, contingency analysis, resilient power grid, robust optimization, mixed-integer conic programming

%%Introduction
\section{Introduction}

Wildfires pose a growing threat to human life, property, and infrastructure, due to increases in severity and annual burned area \citep{wasserman2023climate, nifc2025statistics}.  Fire jeopardizes the resilience of power infrastructure, representing a leading cause of large-scale blackouts which affect hundreds of thousands of customers \citep{hines2008trends}.  Due to wildfire, utilities operating the California power grid incurred estimated costs exceeding \$700 million between 2001 and 2016 \citep{dale2018assessing}.  In one example, the 2017 Thomas Fire in southern California caused power outages for over 260,000 people; repairs to power grid equipment after this fire cost \$49 million \citep{cpuc2017investigation}.  In 2019, a wildfire in Maui, Hawaii damaged transmission lines and threatened a 230 MW generation plant, necessitating power conservation efforts \citep{maui2019fire}.

Several strategies have been proposed to improve the resilience of the power grid to wildfires \citep{nazaripouya2020power}.  These strategies include proactive investments and operational changes.  Proactive investment involves upgrading equipment, undergrounding power lines, managing vegetation, and inspecting infrastructure, to reduce the chance of equipment damage during a wildfire.  Alternatively, changes to operational strategies, including improved dispatch, optimal load shedding, and network reconfiguration, can help alleviate the effects of wildfire on power delivery.  However, effective implementation of these strategies requires an underlying model to evaluate the risk and impact of wildfires.

Wildfire spread is highly unpredictable due to uncertainty in weather patterns, locations of ignition, and responses to intervention \citep{thompson2016uncertainty}.  Many models of wildfire behavior have been proposed; see \cite{sullivan2009physical,sullivan2009empirical,sullivan2009simulation} for an overview.  Traditionally, these models span three classes.  First, \emph{physical} models are based in detailed descriptions of the underlying chemical and physical dynamics involved in combustion (\eg, \citealt{linn1997firetec}).  Second, \emph{empirical} models estimate the rate of fire spread from experimental or historical data (\eg, \citealt{rothermel1972mathematical}).  Third, \emph{simulated analogues} propagate pointwise or cell-based fire representations, based on some underlying physical or empirical model (\eg, \citealt{finney1998farsite}).  Alternatively, machine learning has been used to predict wildfire spread from rich spatial datasets \citep{huot2022next}.  These approaches vary widely in accuracy, detail, spatial and temporal resolution, and requisite computational time and memory.

The increase in wildfire prevalence and severity mandates consideration of the most extreme potential impacts of fires to the power grid.  Of particular concern are the concurrent outages of multiple components (\ie, $N - k$ contingencies) and the timing and sequencing of these outages.  In this work, we take inspiration from the field of robust optimization and treat wildfire as an adversary.  Under the empirical model of \cite{rothermel1972mathematical}, we design an optimization model that characterizes the worst-case wildfire ignition and spread behavior.  This perspective permits identification of realistic fire behavior that imparts the most harm to power infrastructure.  Our adversarial model quantifies how quickly a multi-component outage can occur and identifies outage sequences that cause the largest disruption to power delivery.

\subsection{Literature Review}

Much attention has been devoted to the design and operation of power systems under the threat of wildfire.  Here, we focus specifically on works that address the uncertain impact of natural events on the power grid.

\cite{choobineh2015vulnerability} and \cite{ansari2015optimal} propose a framework for adjusting power line thermal ratings due to the influence of a nearby wildfire.  \cite{mohagheghi2015optimal} solve a stochastic optimal power flow problem over these dynamic line ratings, treating wind and rate of fire spread as random parameters.  \cite{trakas2017optimal} extend this approach to the resilient configuration and operation of distribution grids.  Other approaches model wildfire disruption of power equipment as random events and solve stochastic optimization models to mitigate these disruptions.  \cite{yang2024multi, yang2024multistage} generate outage scenarios using a rule-based cellular automaton to simulate wildfire spread, and solve multi-stage stochastic dispatch models to minimize the expected cost of wildfire interference.  \cite{estrada2025dynamic} consider decision-dependent uncertainty in a multi-stage stochastic transmission switching problem, where line faults occur due to interactions with nearby fires and increase in probability with the amount of power flow on the line.  This work constructs a wildfire scenario tree by simulations of a cellular automaton, initialized with historical wildfire data.  \cite{abdelmalak2022enhancing} describe a Markov decision process for grid operations, where the states represent a sequence of element outages due to a wildfire, generated by element-wise disruption probabilities.

An alternative perspective is to solve a robust optimization problem over an uncertainty set that summarizes reasonable disruptions under a natural disaster.  \cite{tapia2021robust} construct a budget uncertainty set for element outages based on the outcome of a wildfire simulation, and solve a robust dispatch problem.  In the context of hurricanes, \cite{yuan2016robust} use expert knowledge to design a multi-stage and multi-zone uncertainty set for line outages, taking into account spatial and temporal behavior; this uncertainty set is used in a robust network planning problem for the distribution system. To address variability in renewable power generation, \cite{lorca2014adaptive} solve a robust multi-stage dispatch problem that considers the spatial and temporal correlation of wind, using an uncertainty set derived from historical data.

Another common optimization problem at the intersection of wildfire and power systems optimization is the optimal power shutoff problem \citep{rhodes2020balancing}.  A \emph{public safety power shutoff} (PSPS) is an intentional de-energization of power lines, intended to reduce the risk of wildfire ignition from faults; these shutoff decisions can be optimized to balance the risk of ignition due to active line failures against the cost of power delivery and load shedding after de-energization.  \cite{zhou2024mitigating} treat wildfire risk and the resulting PSPS as a stochastic event and optimize the network topology and dispatch as a two-stage stochastic program.  \cite{bayani2023resilient} propose a budget uncertainty set for wildfire risk, electricity demand, and renewable generation, and solve a robust capacity expansion model. \cite{taylor2025optimizing} optimize power line undergrounding decisions with stochastic and robust perspectives on wildfire risk and PSPS events.

Most prior work utilizes simulation and historical data to capture uncertainty in wildfire evolution and interaction with the power grid.  However, these approaches may not accurately capture rare, high-impact, or worst-case outcomes, which are of particular importance when designing and operating a resilient power grid.  In this work, we address this issue by characterizing wildfire uncertainty through the lens of an empirical fire spread model.  To our knowledge, such an approach has not been proposed in the literature.

\subsection{Contributions}

In this work, we propose a novel adversarial model for wildfire spread.  This model maximizes the impact of a newly-ignited wildfire on power grid elements, subject to realistic wind-assisted fire spread dynamics.  We use this model to characterize how soon after ignition multiple-element outages can occur and to determine feasible sequences of element outages that incur the most damage.  The outputs of this model are used to build a security-constrained optimal power flow model that encourages resilient power grid operation against realistic contingencies.

Specifically, our contributions are as follows:
\begin{enumerate}
    \item We formulate a novel mixed-integer conic program (MICP) that maximizes the impact of a realistic, adversarial wildfire on power grid infrastructure;
    \item We propose a two-dimensional wind-assisted fire spread set based on the empirical Rothermel fire spread model \citep{rothermel1972mathematical}.  For computational tractability, we propose conic relaxations of this set, including a novel second-order conic (SOC) inequality that gives a convex relaxation of the inner product over Euclidean balls;
    \item We design experimental settings aligned with three California wildfires: the 2024 Park fire, the 2025 Eaton fire, and the 2025 Palisades fire.  We calibrate our wildfire model and a 3,928-bus optimal power flow model with data from California at the times of the fires.  We propose an approach to aggregate spatial wildfire risk data into polyhedral regions, which are given as inputs to the adversarial wildfire model;
    \item We apply our adversarial model to characterize the minimum time-to-outage for a set of high-impact $N-k$ contingencies and to determine the maximum load shed from a sequence of component disruptions due to the realistic spread of a newly-ignited wildfire; and
    \item We use the minimum time-to-outage values as a contingency screening parameter and solve a detailed security-constrained optimal power flow model, encouraging operational resilience against reasonable contingencies while reducing the total number of contingencies considered.
\end{enumerate}

Although this work presents an adversarial wildfire model in the context of power grid operations, our model formulation is highly generalizable and compatible with other operational problems under the threat of wildfire.  These settings include resource deployment for fighting fires \citep{ntaimo2013simulation} and fuel management for fire mitigation \citep{minas2014spatial}.  Our model can help characterize worst-case wildfire behavior to improve strategies for wildfire prevention, suppression, mitigation, and threat evaluation.

The remainder of this paper is organized as follows.  Section~\ref{sec:fireModel} formulates an optimization model to characterize adversarial wildfires.  Section~\ref{sec:spreadModel} proposes a wildfire spread set, based on the Rothermel spread model, and introduces high-quality convex relaxations of this set.  Section~\ref{sec:micpFormulation} gives an explicit mixed-integer conic formulation of the adversarial model.  Section~\ref{sec:opf} presents our security-constrained optimal power flow model.  Section~\ref{sec:data} describes the experimental settings, power grid and wildfire data, and an approach to summarize wildfire risk values into polyhedral regions.  Section~\ref{sec:experiments} reports computational results on the adversarial wildfire and optimal power flow models.  Section~\ref{sec:conclusion} concludes the paper.

\subsection{Notation}

We denote $\Brack{\cdot} := \{1,\dots,\cdot\}$.  Let $0_n$ be the zero vector in $\mathbb{R}^n$ and $\mathbf{e}_n$ be the vector of ones in $\mathbb{R}^n$.  Let $I_n$ be the identity matrix in $\mathbb{R}^{n \times n}$.  The set of symmetric $n \times n$ matrices is written as $\mathbb{S}^{n}$.  The indicator function for set $A$ is given by $\mathbb{I}_A(\cdot)$.  The dot product of vectors $x$ and $y$ is given by $\ip{x}{y}$ and the Euclidean norm by $\norm{x}_2 = \sqrt{\ip{x}{x}}$.  Similarly, $\ip{A}{B}$ gives the Frobenius inner product of matrices $A$ and $B$, and $A \otimes B$ gives the Kronecker product.  We write the Euclidean ball with center $c \in \mathbb{R}^n$ and radius $r \in \mathbb{R}$ as $\mathcal{B}_r(c) := \{x \in \mathbb{R}^n \,:\, \norm{x - c}_2 \leq r\}$.  We assume that upper bounding power constraints of the form $x \leq y^r$, with variables $(x,y)$ and parameter $r$, implicitly enforce the nonnegativity of the argument $y$.

%%An Optimization Model for Adversarial Wildfires
\section{An Optimization Model for Adversarial Wildfires}
\label{sec:fireModel}

We model the disruption of power infrastructure due to the spread of a newly ignited wildfire.  Let $\mathcal{E}$ give the set of power grid elements threatened by the fire, where $\mathcal{L}_e \subset \mathbb{R}^2$ defines the geometry of grid element $e \in \mathcal{E}$.  For generators or buses, $\mathcal{L}_e$ is a point, and for power lines, $\mathcal{L}_e$ is a line segment between the start and end buses of the line.  In both cases, $\mathcal{L}_e$ is a polyhedral set.  We select $\mathcal{E}$ to be a subset of grid elements that generate a high-impact contingency when outaged (see Section~\ref{subsec:gridData}).

We consider a temporal discretization, where $T$ gives the number of periods (\eg, hours) and $\mathcal{T} := \{0\} \cup \Brack{T}$ gives the set of periods.  The set $\mathcal{S}(w,x) \subset \mathbb{R}^2$ gives the area that is reachable in one period by a wildfire currently burning at point $x \in \mathbb{R}^2$, which depends on the wind velocity $w \in \mathbb{R}^2$.  This ``spread set'' characterizes our model of wildfire spread dynamics; Section~\ref{sec:spreadModel} provides principled constructions for several variants of this set.

We use a binary outage model, where $o_{et} \in \{0,1\}$ is the indicator of whether element $e \in \mathcal{E}$ is outaged by wildfire in period $t \in \mathcal{T}$.  We require
\begin{equation}
    \label{eq:outageCondition}
    o_{et} = 1 \implies \{x_t \in \mathcal{L}_e : x_{t'+1} \in \mathcal{S}(x_{t'},w_{t'})\ \forall t' \in \mathcal{T}\} \neq \emptyset,
\end{equation}
where $w_t$ gives the wind velocity in period $t$ and $x_0$ gives the ignition point of the wildfire.  That is, an element is reachable by wildfire in period $t$ if there is a sequence of points contained in the spread sets where the $t$-th element of the sequence intersects the element geometry.  An element can only be outaged at period $t$ if it is reachable by wildfire in that period; however, an element is not required to be outaged if it is reachable.

The adversarial setting optimizes over the wind velocity, the wildfire ignition point, and the sequence of wildfire points to cause a worst-case impact on the power grid.  Let $x_0$ denote the ignition point and $w_t$ denote the wind velocity at time $t \in \mathcal{T}$.  The adversarial wildfire spread model is as follows:
\begin{subequations}
    \label{abstractOpt}
    \begin{align}
        \max_{o,w,x} \quad & \mathrlap{\sum_{\mathcal{E}' \subseteq \mathcal{E}}\ \sum_{t \in \mathcal{T}} c_t(\mathcal{E}') \ \prod_{e \in \mathcal{E}'} o_{et} \prod_{e \in \mathcal{E} \setminus \mathcal{E}'} (1 - o_{et})} \label{eq:obj}\\
        \mathrm{s.t.} \quad & x_{e,t+1} \in \mathcal{S}(w_t,x_{et}) \quad && \forall e \in \mathcal{E},\ t \in \mathcal{T}, \label{eq:constrSpread}\\
        & o_{et} \leq \mathbb{I}_{\mathcal{L}_e}(x_t) \quad && \forall e \in \mathcal{E},\ t \in \mathcal{T}, \label{eq:constrOutage}\\
        & o_{et} \leq o_{e,t+1} \quad && \forall e \in \mathcal{E},\ t \in \mathcal{T}, \label{eq:constrMonotonic}\\
        & x_0 = x_{e0} \quad && \forall e \in \mathcal{E}, \label{eq:constrIgnition}\\
        & w_t \in \mathcal{B}_\varepsilon(\overline{w}_t) \quad && \forall t \in \mathcal{T}, \label{eq:constrWind}\\
        & o_{et} \in \{0,1\} \quad && \forall e \in \mathcal{E},\ t \in \mathcal{T} \label{eq:constrBin}.
    \end{align}
\end{subequations}
To model the outage condition \eqref{eq:outageCondition}, we introduce variables $x_{et} \in \mathbb{R}^2$ for each element $e \in \mathcal{E}$, which represent points that have been reached by wildfire at time $t$.  The locations of these points interact through the wind velocity variables $w_t$ and the ignition point variable $x_0$.  Constraint~\eqref{eq:constrSpread} enforces pointwise wildfire spread dynamics, and constraint~\eqref{eq:constrOutage} defines the outage indicator logic; namely, an element can only be outaged once it is reached by its corresponding sequence of wildfire points.  Together, these constraints imply the element outage logic \eqref{eq:outageCondition}.  Constraint~\eqref{eq:constrMonotonic} requires that, once an element is outaged, it remains outaged.  Constraint~\eqref{eq:constrIgnition} models ignition, ensuring that the wildfire point sequences all start at the ignition location.  The wind velocity is treated as uncertain and can vary from a nominal prediction $\overline{w}_t$ by magnitude $\varepsilon \geq 0$ under constraint~\eqref{eq:constrWind}.

For every subset $\mathcal{E}' \subseteq \mathcal{E}$ of the power grid elements, the function $c_t(\mathcal{E}')$ assigns a weight to the outage of exactly the elements $\mathcal{E}'$ in period $t$.  Then, the objective~\eqref{eq:obj} maximizes the weighted sum of the outage sequence.  This objective is flexible; for instance, the functions $c_t$ can measure the impact of each outage by some metric (\eg, incurred load shed).  Alternatively, with 
\begin{equation}
    \label{eq:minTimeObj}
    c_t(\mathcal{E}') = \begin{cases}
        1 & \mathcal{E'} = \mathcal{E}\\
        0 & \mathrm{otherwise}
    \end{cases} \quad \forall \mathcal{E}' \subseteq \mathcal{E},\ t \in \mathcal{T},
\end{equation}
the objective measures the number of periods in which all elements are outaged.  Then, the optimization identifies the \emph{minimum time-to-outage}, the soonest period in which all elements $\mathcal{E}$ can be reached by wildfire.

As the variables $o_{et}$ are binary and the geometries $\mathcal{L}_e$ are polyhedral, \eqref{abstractOpt} can be written as an MICP if $\mathcal{S}(w,x)$ are mixed-integer conic representable sets.  We provide explicit reformulations of this problem as an MICP in Section~\ref{sec:micpFormulation}. Before this, we introduce model-driven characterizations of the wildfire spread sets $\mathcal{S}(w,x)$ in Section \ref{sec:spreadModel}.

%%Pointwise Wildfire Spread Models
\section{Pointwise Wildfire Spread Models}
\label{sec:spreadModel}

The wildfire spread dynamics which appear in model~\eqref{abstractOpt} depend on the definition of the spread sets $\mathcal{S}(w,x)$.  This section introduces several choices for this set; these constructions are some of the main technical contributions of this work.  Section~\ref{subsec:rothermel} introduces a convex conic spread set inspired by the Rothermel fire spread model.  However, this spread set is defined by an inner product with the wind vector $w$, which introduces nonconvexity to the adversarial model \eqref{abstractOpt}.  To handle this nonconvexity, Section~\ref{subsec:iprelax} proposes convex relaxations of the inner product, including a novel second-order cone relaxation of the inner product over Euclidean balls.  Section~\ref{subsec:ballrelax} describes an alternative resolution of this nonconvexity by relaxing the base set in a principled way, so that the resulting set is a Euclidean ball.  Section~\ref{subsec:chooserelax} discusses the relative quality of the relaxations.

\subsection{The Rothermel Fire Spread Model}
\label{subsec:rothermel}

The Rothermel fire spread model \citep{rothermel1972mathematical, andrews2018rothermel} is a one-dimensional rate-of-spread fire model.  Given inputs describing the fuel bed (\ie, the mix of organic material which fuels the fire), wind speed, and slope, the model predicts the \emph{rate of spread} $R$, that is, the speed at which the fire expands.  The Rothermel model computes the rate of spread as
\begin{equation}
    \label{eq:rothermel}
    R = \frac{F^{\mathrm{src}}}{E^{\mathrm{ig}}} (1 + \phi_w + \phi_s),
\end{equation}
where $F^{\mathrm{src}}$ is the \emph{thermal flux} of the fire, the amount of heat produced per area per minute, and $E^{\mathrm{ig}}$ is the \emph{ignition energy density} of the fuel bed, the amount of energy required to ignite a unit volume of fuel.  These parameters depend on the composition and structure of the fuel bed.  We define the \emph{base rate of spread} $V : = F^{\mathrm{src}} / E^{\mathrm{ig}}$, which gives the rate of spread without any influence from wind or slope.  The elements $\phi_w$ and $\phi_s$ are flux multipliers due to the impact of wind and slope, respectively.  Under our model calibration, the impact of slope on the rate of spread is much smaller than the impact of wind: the flux multiplier due to a 20 mi/hr wind is over 6x as large as the flux multiplier due to a $30^\circ$ slope.  For this reason and for modeling tractability, we ignore the impact of slope and take $\phi_s = 0$, focusing primarily on the wind effect.  The Rothermel model defines 
\begin{equation}
    \phi_w = C\cdot s^{B},
\end{equation}
where $s$ gives the wind speed.  The coefficient $C$ and exponent $B$ are empirically-derived parameters which vary with two characteristics of the fuel bed: the \emph{packing ratio} (\ie, the proportion of the fuel bed volume composed of fuel particles) and the \emph{surface-area-to-volume ratio} of the fuel particles.

In a two-dimensional model, the wind velocity may not align with the direction of fire spread.  To handle this issue, we project the wind velocity $w$ onto the direction of spread $d$, which gives a spread-aligned wind speed.  Additionally, we model the geographic dependence of the base rate of spread $V$ by a set of regions.  Let $\mathcal{R}$ be the region index set, $\mu_r \in (0,1]$ be the rate of spread multiplier associated with region $r \in \mathcal{R}$, and $\mathcal{P}_r \subset \mathbb{R}^2$ be the geometry of the region.  We use polyhedral regions $\mathcal{P}_r$ which form a partition of the area of study.  A data-driven method for constructing these regions and multipliers is given in Section~\ref{sec:data}.

Let the function $V(x)$ compute the base rate of spread associated with the region containing $x$:
\begin{equation}
    V(x) := \sum_{r \in \mathcal{R}} \mu_r \cdot V\cdot\mathbb{I}_{\mathcal{P}_r}(x).
\end{equation}
We now introduce the \emph{Rothermel-inspired spread set}
\begin{equation}
    \label{eq:rothermelSpread}
    \mathcal{S}^{\mathrm{r}}(w,x) := \left \{d + x : \norm{d}_2 \leq V(x) \left (1 + C \left (\frac{\langle d,w \rangle }{\norm{d}_2} \right ) ^B \right ) \right \}.
\end{equation}
In this definition and elsewhere, we take  $0 / 0 = 0$, so that $d = 0_2$ is feasible.  This set bounds the magnitude of fire spread $\norm{d}_2$ by the rate of spread $R$ under the Rothermel model \eqref{eq:rothermel}, where the wind speed is given by the projection of the wind vector $w$ onto the direction of spread $d$ and the base rate of spread is adjusted by the function $V(x)$.

\begin{proposition}
    \label{prop:conicRothermel}
    Fix some $\{x,w\} \subset \mathbb{R}^2$.  Let $B \in  [0,1]$ and $(C,V(x)) \geq 0$.  Then, the set $\mathcal{S}^{\mathrm{r}}(w,x)$ can be written as the projection of a power conic set; specifically, 
    \begin{equation}
        \label{eq:conicRothermel}
        \mathcal{S}^{\mathrm{r}}(w,x) = \left \{ d + x \,:\, \exists \gamma \ \mathrm{s.t.} \  \begin{array}{rl}
            \gamma_1 & \geq \norm{d}_2\\
            \gamma_1 & \leq (\gamma_2 + \gamma_3)^\frac{1}{1+B}\\
            \gamma_2 & \leq V(x) (\gamma_1)^B\\
            \gamma_3 & \leq C V(x) (\ip{d}{w})^B
        \end{array} \right \}.
    \end{equation}
\end{proposition}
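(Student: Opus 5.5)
The plan is to prove the two inclusions separately after translating by $x$, so that the claim concerns only $d$. Write $V := V(x) \ge 0$ and $a := \ip{d}{w}$. Following the paper's convention, a constraint $\gamma_3 \le CV a^B$ implicitly enforces $a \ge 0$. Correspondingly, in \eqref{eq:rothermelSpread} the term $(\ip{d}{w}/\norm{d}_2)^B$ is meaningful only when $\ip{d}{w} \ge 0$ (or $d = 0_2$). The argument rests on one algebraic identity. For $d \neq 0_2$ with $\rho := \norm{d}_2 > 0$, multiplying the defining inequality of $\mathcal{S}^{\mathrm{r}}$ by $\rho^B > 0$ gives the equivalent form
\[
\rho^{1+B} \le V\rho^B + CV a^B .
\]
The case $d = 0_2$ is trivial on both sides: take $\gamma = 0$.

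For the inclusion ``$\subseteq$'', take $d$ in the Rothermel set and set $\gamma_1 := \norm{d}_2$, $\gamma_2 := V\gamma_1^B$ and $\gamma_3 := CVa^B$. The first, third and fourth constraints then hold with equality. The second constraint is $\gamma_1 \le (\gamma_2+\gamma_3)^{1/(1+B)}$. It is equivalent to $\gamma_1^{1+B} \le \gamma_2+\gamma_3$ because $t \mapsto t^{1+B}$ is increasing on $\mathbb{R}_+$, and this is exactly the identity above.

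For the inclusion ``$\supseteq$'', take $(d,\gamma)$ feasible. Monotonicity gives
\[
\norm{d}_2^{1+B} \le \gamma_1^{1+B} \le \gamma_2+\gamma_3 \le V\gamma_1^B + CVa^B .
\]
The obstacle is that $\gamma_1$, rather than $\norm{d}_2$, appears on the right. I would handle it with a scalar argument. Define $\phi(t) := t^{1+B} - Vt^B$ and note that $\phi(\gamma_1) \le CVa^B$. I will show that $\phi$ is nondecreasing on the region where it is positive. Then $\norm{d}_2 \le \gamma_1$ gives $\phi(\norm{d}_2) \le \max(\phi(\gamma_1),0) \le CVa^B$, and the identity then yields membership.

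To justify the monotonicity claim, factor $\phi(t) = t^B(t - V)$. For $t \le V$ we have $\phi \le 0 \le CVa^B$. For $t \ge V$, both factors $t^B$ and $t - V$ are nonnegative and nondecreasing, so their product is nondecreasing. This is where $B \in [0,1]$ and $V, C \ge 0$ are used. The hypothesis $B \le 1$ is needed for concavity of $t^B$, which makes the constraints $\gamma_2 \le V\gamma_1^B$ and $\gamma_3 \le CVa^B$ convex power-cone constraints. The same hypothesis makes $(\gamma_2+\gamma_3)^{1/(1+B)}$ concave, so the set is genuinely power-cone representable. A brief remark on this representability would finish the proof.
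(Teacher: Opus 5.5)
Your proof is correct and follows the paper's route: for ``$\subseteq$'' you use the same witness $\gamma_1 = \norm{d}_2$, $\gamma_2 = V\gamma_1^B$, $\gamma_3 = CVa^B$, and for ``$\supseteq$'' you use a monotonicity argument in $\gamma_1$ that lets you replace $\gamma_1$ by $\norm{d}_2$ via $\norm{d}_2 \le \gamma_1$. The only difference is bookkeeping: the paper divides by $\gamma_1^B$ and uses that $t \mapsto t^{-B}$ is nonincreasing, giving $\norm{d}_2 \le \gamma_1 \le V + CVa^B\gamma_1^{-B} \le V + CVa^B\norm{d}_2^{-B}$ with no case split, whereas your factorization $\phi(t) = t^B(t-V)$ needs a split at $t = V$ but never divides by $\gamma_1$, so $\gamma_1 = 0$ needs no separate treatment.
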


\begin{proof}
    Observe that, by multiplying by $(\norm{d}_2)^B$ and raising to the power of $1/(1+B)$, the constraint which defines $\mathcal{S}^{\mathrm{r}}(w,x)$ is equivalent to 
    \begin{equation}
        \label{eq:modifiedRothermel}
        \norm{d}_2 \leq \left ( V(x) (\norm{d}_2)^B + C V(x)  (\ip{d}{w})^B\right )^{\frac{1}{1+B}}.
    \end{equation}
    For $d$ satisfying \eqref{eq:modifiedRothermel}, taking 
    $\gamma_1 = \norm{d}_2$ and $(\gamma_2,\gamma_3)$ equal to their upper bounds gives a feasible solution to the conic description \eqref{eq:conicRothermel}.  For $(d,\gamma)$ that satisfy the conic description \eqref{eq:conicRothermel}, by application of the inequalities in $\gamma$ and the monotonicity of the power functions, we have
    \begin{equation}
        \label{eq:rothermelConicProof}
        \begin{aligned}
            \norm{d}_2 \leq \gamma_1 = \frac{(\gamma_1)^{1+B}}{(\gamma_1)^B} \leq V(x) + \frac{C V(x) (\ip{d}{w})^B}{(\gamma_1)^B}.
        \end{aligned}
    \end{equation}
    As $B\geq 0$, the function $(1/\gamma_1)^B$ is decreasing in $\gamma_1$. Then, $\norm{d}_2 \leq \gamma_1$ and \eqref{eq:rothermelConicProof} imply the definition of the set $\mathcal{S}^{\mathrm{r}}(w,x)$.
    Therefore, the two representations are equivalent.  As the exponents $B$ and $1/(1+B)$ fall on $[0,1]$ and the coefficients $C$ and $V(x)$ are nonnegative, the constraints in $\gamma$ are power cone constraints; that is, they are described by conic sets with the form $\{(x,y) \in \mathbb{R}^{n+1} \,:\, \norm{x}_2 \leq y^r\}$, where $r \in [0,1]$.
\end{proof}

Proposition~\ref{prop:conicRothermel} shows that the Rothermel spread set $\mathcal{S}^{\mathrm{r}}(w,x)$ can be reformulated with power cone constraints when $w$ and $x$ are fixed and $B \leq 1$.  However, choosing $\mathcal{S}(w,x) = \mathcal{S}^{\mathrm{r}}(w,x)$ in problem \eqref{abstractOpt} introduces nonconvexity, as the problem optimizes jointly over the spread set and the arguments $x$ and $w$. In particular, the nonconvexity due to the dependence of $V(x)$ on the location $x$ through the indicator function $\mathbb{I}_{\mathcal{P}_r}(x)$ can be handled by the introduction of binary variables, as described in Section~\ref{sec:micpFormulation}.  However, the nonconvexity of the inner product $\langle d, w\rangle$ between the direction of fire spread and the wind vector cannot be easily reformulated.  To handle this interaction, we propose two convex relaxations in the $(w,x)$ space.  In Section~\ref{subsec:iprelax}, we introduce a relaxation that addresses this nonconvexity under the assumption that $B \leq 1$; in Section~\ref{subsec:ballrelax}, we propose an alternative relaxation for the case where $B \geq 1$.  These relaxations maintain the conservatism of the adversarial model by overestimating the impact of the worst-case fire.

\subsection{The Inner Product Spread Set}
\label{subsec:iprelax}

The first approach to address the nonconvexity is to directly relax the inner product.  As in problem~\eqref{abstractOpt}, we assume that the wind vector $w \in \mathcal{B}_{\varepsilon}(\overline{w}_t)$ for some $t$.  Denote by
\begin{equation*}
    \overline{W} := \max_{t \in \mathcal{T}}\ \norm{\overline{w}_t}_2 + \varepsilon \quad \text{and} \quad \overline{R} := V \cdot \left (1 + C \overline{W}^B \right )
\end{equation*}
the bounds on the magnitude of the wind vector $w$ and the rate of spread, respectively.  Then, for any $t$ and $w \in \mathcal{B}_\varepsilon(\overline{w}_t)$, it holds that $\mathcal{S}(w,x) \subseteq \mathcal{B}_{\overline{R}}(x)$.  Over the Euclidean balls which contain $w$ and $d$, we define the hypograph of the inner-product $\langle d, w\rangle$ in dimension $n$ as
\begin{equation}
    \mathcal{Z}^{\mathrm{IP}}_n := \left \{(d,w,z) \in \mathbb{R}^{2n+1} \,:\, \begin{array}{rl}
        z & \leq \ip{d}{w}\\
        d & \in \mathcal{B}_{\overline{R}}(0_n)\\
        w & \in \mathcal{B}_\varepsilon(0_n)
    \end{array} \right \}.
\end{equation}
Here, we are concerned only with the hypograph of the inner product, as the Rothermel spread set~\eqref{eq:rothermelSpread} upper bounds $\norm{d}_2$ by an increasing function in $\ip{d}{w}$.
Additionally, for $x \in \mathcal{S}(w,x')$ and $w \in \mathcal{B}_\varepsilon(\overline{w}_t)$, we have that $\ip{x - x'}{w} = \ip{d}{\overline{w}_t + w'}$, where $w' := w - \overline{w}_t \in \mathcal{B}_\varepsilon(0_n)$ and $d \in \mathcal{B}_{\overline{R}}(0_n)$.  As $\ip{d}{\overline{w}_t}$ is affine in $(d,w)$, it does not hinder convexity, and we only need to consider the bilinear portion $\ip{d}{w'}$.  We proceed by constructing convex relaxations $\overline{\mathcal{Z}} \supseteq \mathcal{Z}^{\mathrm{IP}}_n$.  

\subsubsection{The Shor Relaxation}

A standard convex relaxation of a nonconvex quadratic set is the Shor semidefinite relaxation \citep{shor1987quadratic}.  \cite{wang2024semidefinite} provide sufficient conditions under which this relaxation gives the convex hull of the quadratic set; Theorem~\ref{thm:shorExact} presents one of these results.

\begin{theorem}[\citealt{wang2024semidefinite} Sec.~4.1]
    \label{thm:shorExact}
    For $\{A_i\}_{i=0}^m \subset \mathbb{S}^n$ and $c \in \mathbb{R}^m$, let 
    \begin{equation}
        \label{eq:generalQuadratic}
        \mathcal{Z} := \left \{(y,z) \,:\, \begin{array}{rlr}
            \ip{y}{A_0 y} & \leq 2z &\\
            \ip{y}{A_i y} & \leq c_i & \forall i \in \Brack{m}
        \end{array} \right \}
    \end{equation} and 
    \begin{equation}
        \label{eq:shorQuadratic}
        \overline{\mathcal{Z}} := \left \{(y,z) \,:\, \exists Y \succeq y y \tp \ \mathrm{s.t.} \begin{array}{rlr}
            \ip{A_0}{Y} & \leq 2z &\\
            \ip {A_i}{Y} & \leq c_i & \forall i \in \Brack{m}
        \end{array} \right \}.
    \end{equation}
    If there exists some $\pi \geq 0$, $\{\mathbb{A}_i\}_{i=0}^m \subset \mathbb{S}^r$, and $k \geq m$ such that $\sum_{i=0}^m \pi_i A_i \succ 0$ and $A_i = \mathbb{A}_i \otimes I_k$, then $\conv(\mathcal{Z}) = \overline{\mathcal{Z}}$.
\end{theorem}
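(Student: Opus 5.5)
The containment $\conv(\mathcal{Z}) \subseteq \overline{\mathcal{Z}}$ is immediate. For $(y,z) \in \mathcal{Z}$, take $Y = yy\tp$. Then $\ip{A_i}{Y} = \ip{y}{A_i y}$, so $(y,z) \in \overline{\mathcal{Z}}$. Moreover $\overline{\mathcal{Z}}$ is convex, being the projection of a convex set: the constraint $Y \succeq yy\tp$ is a Schur-complement LMI and the remaining constraints are linear in $(Y,z)$. The plan is therefore to prove the reverse inclusion. We show that every $(y,z) \in \overline{\mathcal{Z}}$ is a convex combination of points of $\mathcal{Z}$, possibly plus a recession direction, which is harmless because $z$ appears only in an upper-bound-on-$2z$ constraint: increasing $z$ preserves membership in $\mathcal{Z}$.

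The key step is a rank-reduction (or rounding) argument that exploits the Kronecker structure $A_i = \mathbb{A}_i \otimes I_k$. Write $y \in \mathbb{R}^{rk}$ as a matrix $\mathbf{Y}_y \in \mathbb{R}^{r\times k}$, so that $\ip{y}{A_i y} = \ip{\mathbb{A}_i}{\mathbf{Y}_y \mathbf{Y}_y\tp}$. Then each quadratic form depends on $y$ only through the $r\times r$ Gram matrix $G = \mathbf{Y}_y\mathbf{Y}_y\tp$, and every PSD $G$ of rank at most $k$ is realizable as such a Gram matrix.

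Given a feasible lifted point $(y,Y,z)$, first replace $Y$ by its partial trace $\mathbb{Y} := \sum_{j=1}^{k} Y_{(\cdot j),(\cdot j)} \in \mathbb{S}^r$, defined so that $\ip{A_i}{Y} = \ip{\mathbb{A}_i}{\mathbb{Y}}$. This gives $\mathbb{Y} \succeq \mathbf{Y}_y\mathbf{Y}_y\tp$. We then need to write $(y, \mathbb{Y})$ as a convex combination of pairs $(y^\ell, G^\ell)$ with $G^\ell = \mathbf{Y}_{y^\ell}\mathbf{Y}_{y^\ell}\tp$ exactly, while preserving the $m$ linear constraints $\ip{\mathbb{A}_i}{\cdot} \le c_i$, $i \in \Brack{m}$.

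To do this, use the standard approach of moving along a segment: set $y^\pm = y \pm t\,u$ with $u$ chosen so that the quadratic constraints remain satisfied. The freedom comes from the $k$ columns. The slack $\mathbb{Y} - \mathbf{Y}_y\mathbf{Y}_y\tp \succeq 0$ can be decomposed as a sum of rank-one terms $vv\tp$. For each such term we choose a direction $U = v q\tp$, with $q \in \mathbb{R}^k$ a unit vector, such that the cross terms $\ip{\mathbb{A}_i}{\mathbf{Y}_y q v\tp + v q\tp \mathbf{Y}_y\tp}$ vanish for all $i \in \{0,\dots,m\}$. This is $m+1$ linear conditions on $q$. Because $\mathbf{Y}_y$ has $k \ge m$ columns, the assumption $k \geq m$ is meant to leave a nonzero $q$ in the kernel, possibly after a preliminary reduction that uses one degree of freedom from the objective row. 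The symmetric pair $y \pm U$ then averages to $y$, and their Gram matrices average to $\mathbf{Y}_y\mathbf{Y}_y\tp + vv\tp$. Iterating this over the rank-one pieces peels off the slack while keeping each $\ip{\mathbb{A}_i}{\cdot}$ value an average, so the constraints $c_i$ hold at each endpoint and the $z$-constraint holds on average.

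Some endpoints may violate a constraint individually, since only averages are controlled. The remedy is to impose that the cross terms vanish, so that each endpoint attains exactly the averaged value. The main obstacle is the dimension count in the kernel step: we need $m+1$ homogeneous conditions on $q \in \mathbb{R}^k$ with only $k \ge m$ unknowns. To close this gap, first handle the $A_0$ row separately by allowing $z$ to absorb its change. The condition $\sum_i \pi_i A_i \succ 0$ is used to bound the feasible set in $y$ (up to recession in $z$). This ensures the decomposition terminates, makes the convex hull closed, and rules out unbounded directions in $y$ that would break the equality $\conv(\mathcal{Z}) = \overline{\mathcal{Z}}$. If the direct count fails, the fallback is to cite the aggregation/S-lemma-type argument of \cite{wang2024semidefinite}, on which the statement relies.
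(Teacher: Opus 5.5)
The paper gives no proof of this statement; it imports it from \cite{wang2024semidefinite}, so your fallback is exactly what the paper does. As a self-contained argument, though, your proposal has a genuine gap, and it is the one you flag yourself. Write $\mathbf{Y}$ for your $\mathbf{Y}_y$. With the equal-weight split $\mathbf{Y}\pm vq\tp$, $\norm{q}_2=1$, each endpoint attains the averaged value of $\ip{\mathbb{A}_i}{\cdot}$ only if $v\tp\mathbb{A}_i\mathbf{Y}q=0$. Absorbing the $i=0$ cross term into $z$ (set $z^\pm=z\pm v\tp\mathbb{A}_0\mathbf{Y}q$) is legitimate. It still leaves $m$ homogeneous conditions on $q\in\mathbb{R}^k$, however, and these guarantee a nonzero $q$ only when $k\ge m+1$.

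At $k=m$ the scheme actually fails, and this is precisely the case $n=k=m=2$ used in Corollary~\ref{corr:ipCH}. Take $\overline{R}=\varepsilon=1$ and the point $(d,w,z)=\bigl((\tfrac12,0),(0,\tfrac12),\tfrac34\bigr)\in\mathcal{Z}^{\mathrm{CH}}_2$. It is certified by a lift with partial trace $\mathbb{Y}=\begin{bmatrix}1&3/4\\3/4&1\end{bmatrix}$. The slack $\mathbb{Y}-\mathbf{Y}\mathbf{Y}\tp$ equals $vv\tp$ with $v=\tfrac{\sqrt3}{2}(1,1)$, so every rank-one decomposition uses this $v$. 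Your conditions $v_1\ip{d}{q}=v_2\ip{w}{q}=0$ then force $q=0$.

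Worse, write any decomposition as $(d+a,w+b)$ with $\mathbb{E}a=\mathbb{E}b=0$. Cauchy--Schwarz gives $\mathbb{E}\ip{a}{b}\le\sqrt{\mathbb{E}\norm{a}_2^2\,\mathbb{E}\norm{b}_2^2}\le\tfrac34$. Equality requires $a=b$ and $\norm{d+a}_2=\norm{w+a}_2=1$ almost surely, i.e., $a=s(1,1)$ with $s=(-1\pm\sqrt7)/4$. So the point does lie in $\conv(\mathcal{Z}^{\mathrm{IP}}_2)$, but only via this asymmetric two-point split, and no equal-weight scheme can reach it.

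The missing idea is to give up the symmetry and use the asymmetry as the extra degree of freedom. Split along $U=vq\tp$, $\norm{q}_2=1$, into $\mathbf{Y}+t_+U$ and $\mathbf{Y}-t_-U$ with weights $t_-/(t_++t_-)$ and $t_+/(t_++t_-)$, where $t_\pm>0$ and $t_+t_-=1$. The mean is $\mathbf{Y}$, and the averaged Gram matrix is exactly $\mathbf{Y}\mathbf{Y}\tp+vv\tp$. The two endpoint values of $\ip{\mathbb{A}_i}{\cdot}$ differ by $(t_++t_-)\bigl(2v\tp\mathbb{A}_i\mathbf{Y}q+\delta\,v\tp\mathbb{A}_iv\bigr)$ with $\delta:=t_+-t_-$, and $\delta$ may be any real number. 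So you need a nonzero $(q,\delta)\in\mathbb{R}^{k+1}$ solving $m$ homogeneous linear equations ($i\in\Brack{m}$), which exists since $k+1>m$.

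If some solution has $q\neq0$, rescale it to $\norm{q}_2=1$ and solve $t_+-t_-=\delta$, $t_+t_-=1$. If every solution has $q=0$, then $v\tp\mathbb{A}_iv=0$ for all $i\in\Brack{m}$. This is where the hypothesis on $\pi$ is really used: $\pi_0\,v\tp\mathbb{A}_0v=v\tp\bigl(\sum_{i=0}^m\pi_i\mathbb{A}_i\bigr)v>0$. The piece $vv\tp$ can therefore simply be deleted from $\mathbb{Y}$: the constraint rows are unchanged and the $A_0$ row only decreases. With the $A_0$ cross terms absorbed into $z$ as you proposed, iterating over the at most $r$ rank-one pieces of the slack gives a finite decomposition.

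In particular, $\pi$ is not needed to bound $y$ (which fails when $\pi_0>0$), to force termination, or to get closedness, contrary to what your last paragraph suggests.
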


Theorem~\ref{thm:shorExact} establishes that, if a dual strict feasibility condition is met and the quadratic set is sufficiently symmetric, the Shor relaxation gives the convex hull.  In Corollary~\ref{corr:ipCH}, we demonstrate that this result applies to the inner product hypograph over Euclidean balls.  Note that Corollary~\ref{corr:ipCH} does not apply when $n=1$; in this case, $\mathcal{Z}^{\mathrm{IP}}_1$ reduces to the product of one-dimensional variables over a box, where the convex hull is given by the polyhedral McCormick envelope \citep{mccormick1976computability}.  

\begin{corollary}
    \label{corr:ipCH}
    For $n \geq 2$, it holds that $\conv (\mathcal{Z}^{\mathrm{IP}}_n) = \mathcal{Z}^{\mathrm{CH}}_n$, where
    \begin{equation}
        \mathcal{Z}^{\mathrm{CH}}_n := \left \{ (d,w,z) \,:\, \begin{array}{rl}
            \exists \begin{bmatrix}
                D & Z \tp\\
                Z & W
            \end{bmatrix} & \succeq \begin{bmatrix}
                d\\w
            \end{bmatrix} \begin{bmatrix}
                d\\w
            \end{bmatrix} \tp\\
            \tr (Z) & \geq z\\
            \tr (D) & \leq \overline{R}^2\\
            \tr (W) & \leq \varepsilon^2
        \end{array} \right \}.
    \end{equation}
\end{corollary}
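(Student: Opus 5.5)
The plan is to cast $\mathcal{Z}^{\mathrm{IP}}_n$ in the form \eqref{eq:generalQuadratic} and then invoke Theorem~\ref{thm:shorExact}. Set $y = (d,w) \in \mathbb{R}^{2n}$ and $m = 2$. The objective quadratic is $z \leq \ip{d}{w}$, which is equivalent to $-2\ip{d}{w} \leq -2z$. To match the sign convention $\ip{y}{A_0 y} \leq 2z$, I will work with the variable $z' := -z$; this is an affine bijection, so it commutes with taking convex hulls. With this substitution, the three constraints are encoded by
\begin{equation*}
    \mathbb{A}_0 = \begin{bmatrix} 0 & -1 \\ -1 & 0 \end{bmatrix},\quad
    \mathbb{A}_1 = \begin{bmatrix} 1 & 0 \\ 0 & 0 \end{bmatrix},\quad
    \mathbb{A}_2 = \begin{bmatrix} 0 & 0 \\ 0 & 1 \end{bmatrix},\quad
    A_i = \mathbb{A}_i \otimes I_n,
\end{equation*}
together with $c_1 = \overline{R}^2$ and $c_2 = \varepsilon^2$. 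Then $\ip{y}{A_0 y} = -2\ip{d}{w} \le 2z'$, $\ip{y}{A_1 y} = \norm{d}_2^2 \le \overline{R}^2$, and $\ip{y}{A_2 y} = \norm{w}_2^2 \le \varepsilon^2$. In this notation $r = 2$ and $k = n$.

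Next I check the two hypotheses of the theorem. The symmetry requirement $k \ge m$ becomes $n \ge 2$, which is exactly the assumption of the corollary; this explains why $n = 1$ is excluded. For dual strict feasibility, take $\pi = (0,1,1)$, which gives $\pi_1 A_1 + \pi_2 A_2 = I_{2n} \succ 0$. Theorem~\ref{thm:shorExact} then yields $\conv(\mathcal{Z}) = \overline{\mathcal{Z}}$.

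It remains to translate $\overline{\mathcal{Z}}$ back into the stated form. Partition $Y = \begin{bmatrix} D & Z\tp \\ Z & W \end{bmatrix}$. Then
\begin{equation*}
    \ip{A_0}{Y} = -\tr(Z) - \tr(Z\tp) = -2\tr(Z),\quad \ip{A_1}{Y} = \tr(D),\quad \ip{A_2}{Y} = \tr(W).
\end{equation*}
So the constraint $\ip{A_0}{Y} \le 2z'$ reads $-2\tr(Z) \le -2z$, that is, $\tr(Z) \ge z$. The other two constraints are $\tr(D) \le \overline{R}^2$ and $\tr(W) \le \varepsilon^2$. Undoing $z' = -z$ recovers exactly $\mathcal{Z}^{\mathrm{CH}}_n$.

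The step needing care is the sign and orientation bookkeeping. The set is a hypograph in $z$, while the theorem is stated for an epigraph-type inequality $\le 2z$, so I must confirm that the reflection $z \mapsto -z$ preserves convex hulls. It does, because convex hulls commute with linear bijections.

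I also need $\ip{A_0}{Y}$ to pick up both off-diagonal blocks, which gives the factor $2$ that cancels the $2$ in $2z$.

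Finally, I must ensure the theorem's hypotheses impose no extra requirements, such as $c_i > 0$. If $\varepsilon = 0$, the result still holds: $W \succeq ww\tp$ together with $\tr(W) \le 0$ forces $W = 0$ and $w = 0$, and then positive semidefiniteness forces $Z = 0$. Both sides therefore reduce to $z \le 0$ with $\norm{d}_2 \le \overline{R}$, so I would note that this degenerate case can be checked directly if needed.
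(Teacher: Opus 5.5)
Your proposal is correct and matches the paper's proof: it uses the same matrices $\mathbb{A}_0,\mathbb{A}_1,\mathbb{A}_2$ with $k=n$, the same certificate $\pi=(0,1,1)$, and the same reflection $z\mapsto -z$ to move between the hypograph and the form required by Theorem~\ref{thm:shorExact}. Your separate check of the case $\varepsilon=0$ is harmless but unnecessary, since the theorem as stated places no sign condition on $c$.
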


\begin{proof}
    Let $y = \begin{bmatrix}
        d & w
    \end{bmatrix} \tp \in \mathbb{R}^{2n}$ and 
    \begin{equation*}
        \mathbb{A}_0 := -\begin{bmatrix}
            0 & 1\\
            1 & 0
        \end{bmatrix},\ \mathbb{A}_1 := \begin{bmatrix}
            1 & 0\\
            0 & 0
        \end{bmatrix},\text{ and } \mathbb{A}_2 := \begin{bmatrix}
            0 & 0\\
            0 & 1
        \end{bmatrix}.
    \end{equation*}
    Construct the sets $\mathcal{Z}$ and $\overline{\mathcal{Z}}$ as in \eqref{eq:generalQuadratic}~and~\eqref{eq:shorQuadratic}, with $k = n$ and $A_i := \mathbb{A}_i \otimes I_n$. 
    Then, 
    \begin{equation*}
        \begin{aligned}
            \ip{y}{A_1 y} \leq \overline{R}^2 & \iff d \in \mathcal{B}_{\overline{R}}(0_n),\\
            \ip{y}{A_2 y} \leq \varepsilon^2 & \iff w \in \mathcal{B}_{\varepsilon}(0_n), \text{ and}\\
            \ip{y}{A_0 y} \leq 2z & \iff -z \leq \ip{d}{w}.
        \end{aligned}
    \end{equation*}
    We then have that $\mathcal{Z} = \{ (d,w,-z) \,:\, (d,w,z) \in \mathcal{Z}^{\mathrm{IP}}_n\}$ and that $\overline{\mathcal{Z}} = \{(d,w,-z) \,:\, (d,w,z) \in \mathcal{Z}^{\mathrm{CH}}_n\}$.  
    Take $\begin{bmatrix}
        \pi_0 & \pi_1 & \pi_2
    \end{bmatrix} = \begin{bmatrix}
        0 & 1 & 1
    \end{bmatrix}$.  Then, $\sum_{i=0}^2 \pi_i A_i = I_{2n} \succ 0$, and $k = n \geq 2 = m$, so the conditions of Theorem~\ref{thm:shorExact} hold and $\conv(\mathcal{Z}) = \overline{\mathcal{Z}}$. The result follows from the fact that the convex hull of an affine transformation of a set is exactly the affine transformation of the convex hull of the set.
\end{proof}

Although we directly characterize $\conv(\mathcal{Z}^{\mathrm{IP}}_n)$ via the Shor relaxation, the set $\mathcal{Z}^{\mathrm{CH}}_n$ is modeled with semidefinite constraints.  If this set is used to construct $\mathcal{S}(w,x)$, the problem~\eqref{abstractOpt} becomes a mixed-integer semidefinite program, which is beyond the current reach of commercial solvers.  For this reason, we continue to seek convex relaxations with structures that are compatible with mixed-integer optimization.

\subsubsection{Second-Order Minor Constraints}
\label{subsubsec:SOMIP}
Inspired by the Shor relaxation $\mathcal{Z}^{\mathrm{CH}}_n$, we introduce a SOC relaxation.  Let variables $\delta \in \mathbb{R}^n$, $\omega \in \mathbb{R}^n$, and $\zeta \in \mathbb{R}^n$ correspond to the diagonal elements of the matrices $D$, $W$, and $Z$, respectively.  Then, the three trace constraints of $\mathcal{Z}^{\mathrm{CH}}_n$ can be written as 
\begin{equation}
    \label{eq:2mSOCSum}
    \sum_{i=1}^n \delta_i \leq \overline{R}^2,\quad \sum_{i=1}^n \omega_i \leq \varepsilon^2, \quad \text{and} \quad \sum_{i=1}^n \zeta_i \geq z.
\end{equation}

By the positive semidefinite condition of the Schur complement \citep[\eg,][App.~A.5]{boyd2004convex}, we can equivalently write the semidefinite constraint of $\mathcal{Z}^{\mathrm{CH}}_{n}$ as 
\begin{equation*}
    \begin{bmatrix}
        1 & d \tp & w \tp\\
        d & D & Z \tp\\
        w & Z & W
    \end{bmatrix} \succeq 0.
\end{equation*}
The nonnegativity of the $2 \times 2$ minors of this matrix implies for all $i \in \Brack{n}$ that
\begin{equation}
    \label{eq:2mSOCMinor}
    \delta_i \geq (d_i)^2,\quad \omega_i \geq (w_i)^2, \quad \text{and} \quad \delta_i \omega_i \geq \zeta_i^2,
\end{equation}
where these constraints are taken from the $2 \times 2$ submatrices with index pairs $(1,i+1)$, $(1,2i+1)$, and $(i+1,2i+1)$, respectively.  All three constraints are SOC constraints and thus are compatible with optimization over mixed-integer variables.  We denote the relaxation generated by these constraints as 
\begin{equation}
    \mathcal{Z}^{\mathrm{2M}}_n := \{(d,w,z) \,:\, \exists (\delta,\omega,\zeta) \text{ s.t. } \eqref{eq:2mSOCSum},\ \eqref{eq:2mSOCMinor}\}.
\end{equation}
As this set is implied by the Shor relaxation, we have $\mathcal{Z}^{\mathrm{2M}}_n \supseteq \mathcal{Z}^{\mathrm{CH}}_n \supseteq \mathcal{Z}^{\mathrm{IP}}_n$.

For each element $e \in \mathcal{E}$ and period $t \in \mathcal{T}$, the constraint \eqref{eq:constrSpread} implements a copy of the set $\mathcal{S}(w_t,x_{et})$.  If the relaxation $\mathcal{Z}^{\mathrm{2M}}_2$ is used to generate these sets, proxy variables $(\delta_{i,et},\omega_{i,et},\zeta_{i,et})$ are introduced for all $e$ and $t$.  However, for a fixed period $t$, the same wind vector variable $w_t$ appears in the set $\mathcal{S}(w_t,x_{et})$ for all elements $e$.  As the proxy variables $\omega_{i,et}$ represent the squared components of $w_t$, we can also require that these variables take the same value across copies of $\mathcal{S}(w_t,x_{et})$ in the same period $t$.  That is, we can enforce $\omega_{i,e_1t} = \omega_{i,e_2t}$ for any $\{e_1,e_2\} \subseteq \mathcal{E}$, which tightens this relaxation across elements.

\subsubsection{Rotated McCormick Inequalities}

Another approach to handle the inner product is to relax the ball constraints to box constraints, so that the problem becomes separable. Then, we can apply the upper-bounding inequalities of the McCormick envelope \citep{mccormick1976computability}:
\begin{equation}
    \label{eq:McCormick}
    \begin{aligned}
        \mathcal{Z}^{\mathrm{IP}}_n & \subseteq \left \{(d,w,z) \,:\, \begin{array}{rl}
        z & \leq \ip{d}{w}\\
        d & \in \left [ -\overline{R},\overline{R} \right ] ^n\\
        w & \in [-\varepsilon,\varepsilon]^n
    \end{array} \right \}\\
    & \subseteq \left \{ (d,w,z) \,:\, \begin{array}{rlr}
        z & \leq \sum_{i=1}^n z_i &\\
        z_i & \leq \varepsilon (d_i + \overline{R}) - \overline{R}w_i & \forall i \in \Brack{n}\\
        z_i & \leq \overline{R} (w_i + \varepsilon) - \varepsilon d_i & \forall i \in \Brack{n}
    \end{array} \right \}.
    \end{aligned}
\end{equation}
Although valid, this approach ignores the interaction between components of $d$ and $w$ in the ball constraints.  To incorporate this interaction, we observe that $\mathcal{Z}^{\mathrm{IP}}_n$ is invariant to a rotation of the Cartesian axes.  
For any rotation, we can then add the upper-bounding McCormick inequalities and tighten the relaxation.

As in \eqref{abstractOpt}, we fix the dimension to $n = 2$.  Let $M_\theta \in \mathbb{R}^{2 \times 2}$ be the rotation matrix for angle $\theta \in \mathbb{R}$:
\begin{equation*}
    M_\theta := \begin{bmatrix}
        \cos(\theta) & -\sin(\theta)\\
        \sin(\theta) & \cos(\theta)
    \end{bmatrix}.
\end{equation*}
As ${M_\theta} \tp M_\theta = I_2$, for any $(d,w,z) \in \mathcal{Z}^{\mathrm{IP}}_2$, we have that
\begin{equation*}
    \begin{aligned}
        z & \leq \ip{M_\theta d}{M_\theta w},\\
        M_\theta d & \in \mathcal{B}_{\overline{R}}(0_2),\text{ and } M_\theta w \in \mathcal{B}_\varepsilon(0_2);
    \end{aligned}
\end{equation*}
that is, a rotation of the coordinate system by $\theta$ yields the same set.  Therefore, imposing the McCormick inequalities after rotation gives valid inequalities for $\mathcal{Z}^{\mathrm{IP}}_2$. A McCormick upper bound in the rotated space is 
\begin{equation}
    \label{eq:MCUBinRotSpace}
    z \leq \overline{R} \ip{\mathbf{e}_2}{M_\theta w} - \varepsilon \ip{\mathbf{e}_2}{M_\theta d} + 2\varepsilon\overline{R}.
\end{equation}
Note that we selected one of the two McCormick inequalities for each component of the inner product; any combination of the inequalities will yield an equivalent result.

For all $\theta$, inequality~\eqref{eq:MCUBinRotSpace} is valid for $\mathcal{Z}^{\mathrm{IP}}_2$.  We write a robust constraint over $\theta$ and replace $M_\theta$ with its definition:
\begin{equation*}
    z \leq 2\varepsilon\overline{R} + \min_\theta\ \begin{array}{l}
        \left (\overline{R}(w_1 + w_2) - \varepsilon(d_1 + d_2) \right ) \cos(\theta)\\ + \left (\overline{R} (w_1 - w_2) - \varepsilon (d_1 - d_2) \right ) \sin(\theta).
    \end{array}
\end{equation*}
Under the change of variables $c := \cos(\theta)$ and $s := \sin(\theta)$ with constraint $c^2 + s^2 = 1$, the minimization is of a linear function over the surface of a ball, which is solvable in closed form.  This yields the inequality
\begin{equation}
    \label{eq:RotMCLoose}
    \norm{\begin{bmatrix}
        \overline{R} (w_1 + w_2) - \varepsilon (d_1 + d_2)\\
        \overline{R} (w_1 - w_2) - \varepsilon (d_1 - d_2)
    \end{bmatrix}}_2 \leq 2 \varepsilon \overline{R} - z.
\end{equation}
This expression is a convex SOC constraint that implies the McCormick inequalities under every axis rotation. However, squaring both sides reveals an extraneous $z^2$ term on the right-hand side.  This term can be canceled, tightening the constraint while maintaining its validity and convexity:
\begin{equation}
    \label{eq:RotMC}
    \norm{\begin{bmatrix}
        \overline{R}(w_1 + w_2) - \varepsilon(d_1 + d_2)\\
        \overline{R} (w_1 - w_2) - \varepsilon (d_1 - d_2)\\
        z
    \end{bmatrix}}_2 \leq 2 \varepsilon \overline{R} - z.
\end{equation}
We name the tightened constraint~\eqref{eq:RotMC} the \emph{rotated McCormick inequality}.  Let $\mathcal{Z}^{\mathrm{RMC}} := \{(d,w,z) \,:\, \eqref{eq:RotMC}\}$.  Theorem~\ref{thm:rotMCValid} gives a direct proof that these inequalities are valid for $\mathcal{Z}^{\mathrm{IP}}_2$.

\begin{theorem}
    \label{thm:rotMCValid}
    For any $(d,w,z) \in \mathcal{Z}^{\mathrm{IP}}_2$, the inequalities \eqref{eq:RotMCLoose} and \eqref{eq:RotMC} are satisfied.  Thus, $\mathcal{Z}^\mathrm{RMC} \supseteq \mathcal{Z}^{\mathrm{IP}}_2$.
\end{theorem}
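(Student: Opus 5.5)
The plan is a direct algebraic verification: square both sides of \eqref{eq:RotMC} and reduce everything to the ball constraints and the hypograph inequality $z \leq \ip{d}{w}$. First, fix $(d,w,z) \in \mathcal{Z}^{\mathrm{IP}}_2$ and introduce the auxiliary vector $a := \overline{R} w - \varepsilon d \in \mathbb{R}^2$. The first two entries of the vector on the left of \eqref{eq:RotMC} are then $a_1 + a_2$ and $a_1 - a_2$. By the parallelogram identity, the sum of their squares equals $2\norm{a}_2^2$. Expanding $\norm{a}_2^2$ gives
\begin{equation*}
    2\norm{a}_2^2 = 2\left(\overline{R}^2 \norm{w}_2^2 + \varepsilon^2 \norm{d}_2^2 - 2\varepsilon\overline{R}\ip{d}{w}\right).
\end{equation*}

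Second, I check that the right-hand side $2\varepsilon\overline{R} - z$ is nonnegative, so that squaring is an equivalence. By Cauchy--Schwarz and the ball constraints, $z \leq \ip{d}{w} \leq \norm{d}_2\norm{w}_2 \leq \overline{R}\varepsilon$. Hence $2\varepsilon\overline{R} - z \geq \varepsilon\overline{R} \geq 0$.

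With this in hand, \eqref{eq:RotMC} is equivalent to
\begin{equation*}
    2\norm{a}_2^2 + z^2 \leq (2\varepsilon\overline{R} - z)^2.
\end{equation*}
After cancelling $z^2$, this becomes $2\norm{a}_2^2 \leq 4\varepsilon\overline{R}(\varepsilon\overline{R} - z)$. This cancellation is exactly the one motivating the tightening from \eqref{eq:RotMCLoose} to \eqref{eq:RotMC}. Using $\norm{w}_2 \leq \varepsilon$ and $\norm{d}_2 \leq \overline{R}$ in the expansion above bounds the left side by $4\varepsilon\overline{R}(\varepsilon\overline{R} - \ip{d}{w})$. Since $\varepsilon\overline{R} \geq 0$ and $z \leq \ip{d}{w}$, this is at most $4\varepsilon\overline{R}(\varepsilon\overline{R} - z)$, which proves \eqref{eq:RotMC}.

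Finally, \eqref{eq:RotMCLoose} follows immediately. Its left-hand norm omits the $z$ entry, so it is no larger than the left-hand norm in \eqref{eq:RotMC}, while the right-hand sides coincide. The inclusion $\mathcal{Z}^{\mathrm{RMC}} \supseteq \mathcal{Z}^{\mathrm{IP}}_2$ is then just a restatement of the validity of \eqref{eq:RotMC}.

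The only delicate point is the sign of the right-hand side before squaring, which the Cauchy--Schwarz bound handles. Everything else is routine expansion, and degenerate cases such as $\varepsilon = 0$ pass through the same inequalities unchanged.
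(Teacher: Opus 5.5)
Your proposal is correct and is essentially the paper's proof. Both arguments expand the squared norm of the first two entries to $2\overline{R}^2\norm{w}_2^2 + 2\varepsilon^2\norm{d}_2^2 - 4\varepsilon\overline{R}\ip{d}{w}$, bound it by $4\varepsilon^2\overline{R}^2 - 4\varepsilon\overline{R}z = (2\varepsilon\overline{R}-z)^2 - z^2$ using the ball constraints and $z \leq \ip{d}{w}$, and use $z \leq \ip{d}{w} \leq \varepsilon\overline{R}$ for nonnegativity of the right-hand side. Your derivation of \eqref{eq:RotMCLoose} by dropping the $z$ entry is equivalent to the paper's final step $(2\varepsilon\overline{R}-z)^2 - z^2 \leq (2\varepsilon\overline{R}-z)^2$.
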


\begin{proof}
For $(d,w,z) \in \mathcal{Z}^{\mathrm{IP}}_2$, observe that
\begin{equation*}
    \begin{aligned}
        & \norm{\begin{bmatrix}
            \overline{R}(w_1 + w_2) - \varepsilon(d_1 + d_2)\\
            \overline{R} (w_1 - w_2) - \varepsilon (d_1 - d_2)
        \end{bmatrix}}^2_2\\
        =\ & 2 \overline{R}^2 \norm{w}_2^2 + 2 \varepsilon^2 \norm{d}_2^2 - 4 \varepsilon \overline{R} \ip{d}{w}\\
        \leq\ & 4 \varepsilon^2 \overline{R}^2 - 4 \varepsilon \overline{R} z \ =\ (2 \varepsilon \overline{R} - z)^2 - z^2\\
        \leq\ & \left (2 \varepsilon \overline{R} - z \right )^2,
    \end{aligned}
\end{equation*}
where the first inequality follows from the definition of $\mathcal{Z}^{\mathrm{IP}}_2$.  The validity of \eqref{eq:RotMC} follows from the first inequality and that of \eqref{eq:RotMCLoose} follows from the second inequality.  Note that $z \leq \ip{d}{w} \leq \varepsilon \overline{R}$ gives the nonnegativity of the right hand sides.
\end{proof}

\subsubsection{Inner Product Spread Set Formulation}

We define a relaxation of the Rothermel spread set
\begin{equation}
    \mathcal{S}^{\langle \rangle}(w,x) := \left \{d + x \,:\, \begin{array}{l}
        \norm{d}_2 \leq V(x) \left (1 + C \left (\frac{z }{\norm{d}_2} \right ) ^B \right )\\
        (d,w,z) \in \overline{\mathcal{Z}}
    \end{array} \right \},
\end{equation}
where $\overline{\mathcal{Z}}$ is a convex relaxation of $\mathcal{Z}^{\mathrm{IP}}_2$.  For $B \in [0,1]$, the \emph{inner product spread set} $\mathcal{S}^{\langle \rangle}(w,x)$ is convex even when $w$ is treated as a variable and can be reformulated using power cones as in~\eqref{eq:conicRothermel}.  Note that, if $\overline{w}_t$ is not zero, the inner product $z$ also contains the term $\ip{\overline{w}_t}{d}$; we suppress this dependence of $\mathcal{S}^{\langle \rangle}(w,x)$ on $\overline{w}_t$.  Figure~\ref{fig:IPSpreadRegions} shows the the geometry of $\mathcal{S}^{\langle \rangle}(w,0_2)$ for different values of $w$ and choices of the set $\overline{\mathcal{Z}}$.  The parameters $(B,C,V)$ are calibrated as described in Section~\ref{sec:data}, with $B = 0.9093$.  In this figure, CH denotes the convex hull relaxation $\left (\overline{\mathcal{Z}} = \mathcal{Z}^{\mathrm{CH}}_2 \right )$, 2M denotes the second-order minor constraints $\left ( \overline{\mathcal{Z}} = \mathcal{Z}^{\mathrm{2M}}_2 \right )$, RMC denotes the rotated McCormick inequality $\left ( \overline{\mathcal{Z}} = \mathcal{Z}^{\mathrm{RMC}} \right )$, and MC denotes the standard McCormick inequalities \eqref{eq:McCormick}.  The rotated McCormick often gives a close approximation of the convex hull and is stronger than the other relaxations.  As the second-order minor constraints can be tightened by sharing variables across elements $e \in \mathcal{E}$, we use both these constraints and the rotated McCormick inequalities, and take $\overline{\mathcal{Z}} = \mathcal{Z}^{\mathrm{RMC}} \cap \mathcal{Z}^{\mathrm{2M}}_2$.

%%%%%%%%%%%%%%%%%% Figure %%%%%%%%%%%%%%%%%%
\begin{figure}[tp]
    \centering
    \includegraphics{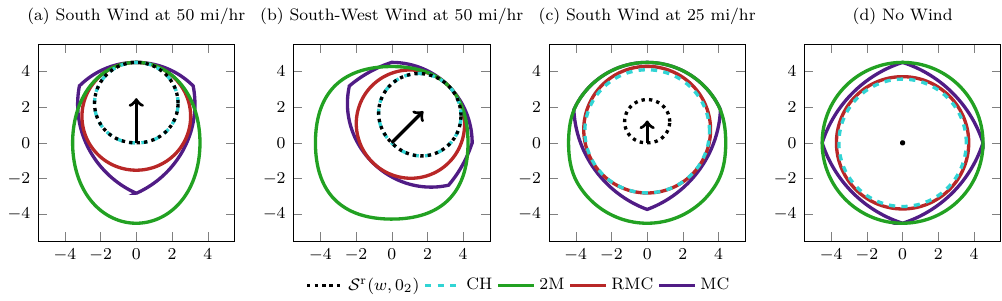}
    \caption{Comparison of spread sets $\mathcal{S}^{\langle \rangle}(w,0_2)$ for each convex relaxation of the inner product.  Wind velocity $w$ is fixed in each plot.  We set $B = 0.9093$, $\overline{w} = 0_2$, and $\varepsilon = 50\ \text{mi/hr}$.  Units are miles; arrows show the wind direction and relative magnitude.}
    \label{fig:IPSpreadRegions}
\end{figure}
%%%%%%%%%%%%%%%%%% Figure %%%%%%%%%%%%%%%%%%
 
\subsection{Ball Spread Set}
\label{subsec:ballrelax}

Instead of relaxing the inner product, we can instead directly relax the set $\mathcal{S}^{\mathrm{r}}(w,x)$.  We propose a principled relaxation of $\mathcal{S}^{\mathrm{r}}(w,x)$ that yields a tractable convex set in the $(w,x)$ space.  We develop this relaxation under the assumption that $B \geq 1$, providing an alternative to the inner product spread set, which requires that $B \leq 1$.  
Lemma~\ref{lemma:ballSum} gives a fundamental property on the Minkowski sum of Euclidean balls.  This property is used to establish the other results in this section.

\begin{lemma}
    \label{lemma:ballSum}
    Let $\{c_{\mathrm{A}},c_{\mathrm{B}}\} \subset \mathbb{R}^n$ and $\{r_{\mathrm{A}},r_{\mathrm{B}}\} \subset \mathbb{R}_\geq$.  Then, $\mathcal{B}_{r_{\mathrm{A}}}(c_{\mathrm{A}}) + \mathcal{B}_{r_{\mathrm{B}}}(c_{\mathrm{B}}) = \mathcal{B}_{r_{\mathrm{A}}+r_{\mathrm{B}}}(c_{\mathrm{A}} + c_\mathrm{B})$.
\end{lemma}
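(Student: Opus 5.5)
We prove the set equality by showing both inclusions directly from the definition of the Minkowski sum, $\mathcal{B}_{r_{\mathrm{A}}}(c_{\mathrm{A}}) + \mathcal{B}_{r_{\mathrm{B}}}(c_{\mathrm{B}}) = \{a + b : a \in \mathcal{B}_{r_{\mathrm{A}}}(c_{\mathrm{A}}),\ b \in \mathcal{B}_{r_{\mathrm{B}}}(c_{\mathrm{B}})\}$.

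\emph{Inclusion $\subseteq$.} Take $a$ and $b$ in the respective balls. By the triangle inequality,
\[
\norm{(a+b) - (c_{\mathrm{A}} + c_{\mathrm{B}})}_2 \leq \norm{a - c_{\mathrm{A}}}_2 + \norm{b - c_{\mathrm{B}}}_2 \leq r_{\mathrm{A}} + r_{\mathrm{B}}.
\]
Hence $a + b \in \mathcal{B}_{r_{\mathrm{A}}+r_{\mathrm{B}}}(c_{\mathrm{A}} + c_{\mathrm{B}})$.

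\emph{Inclusion $\supseteq$.} Take $y$ with $\norm{y - c_{\mathrm{A}} - c_{\mathrm{B}}}_2 \leq r_{\mathrm{A}} + r_{\mathrm{B}}$, and set $u := y - c_{\mathrm{A}} - c_{\mathrm{B}}$. The idea is to split $u$ proportionally to the radii. If $r_{\mathrm{A}} + r_{\mathrm{B}} > 0$, define
\[
a := c_{\mathrm{A}} + \frac{r_{\mathrm{A}}}{r_{\mathrm{A}}+r_{\mathrm{B}}}\, u, \qquad b := c_{\mathrm{B}} + \frac{r_{\mathrm{B}}}{r_{\mathrm{A}}+r_{\mathrm{B}}}\, u.
\]
Then $a + b = y$. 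Moreover, $\norm{a - c_{\mathrm{A}}}_2 = \frac{r_{\mathrm{A}}}{r_{\mathrm{A}}+r_{\mathrm{B}}}\norm{u}_2 \leq r_{\mathrm{A}}$, and the analogous bound holds for $b$. So $y$ lies in the Minkowski sum.

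The only point needing care is the degenerate case $r_{\mathrm{A}} = r_{\mathrm{B}} = 0$, where the proportional weights are undefined. In that case $u = 0$, so $y = c_{\mathrm{A}} + c_{\mathrm{B}}$, and we take $a = c_{\mathrm{A}}$, $b = c_{\mathrm{B}}$. This is really the only obstacle; the rest is the triangle inequality and homogeneity of the norm. Nonnegativity of the radii, assumed in the statement, is what makes the scaling factors lie in $[0,1]$.
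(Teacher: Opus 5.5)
Your proof is correct and complete: the triangle inequality gives $\subseteq$, and splitting $u$ in proportion to the radii gives $\supseteq$, with the case $r_{\mathrm{A}} = r_{\mathrm{B}} = 0$ treated separately. The paper states this lemma without proof, as a standard fact, and your two-inclusion argument is the standard one. It also covers zero radii, which the paper needs when it adds the point $x$ (that is, $\mathcal{B}_0(x)$) in Propositions~\ref{prop:relaxedSpread} and~\ref{prop:ballCH}.
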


For convenience, we define the functions
\begin{equation}
    \tau(\nu) := V(x) + \frac{C V(x)}{2} \nu^B \  \text{and} \  \sigma(\nu) := \frac{C V(x)}{2} \nu^{B-1}.
\end{equation}
Note that the dependence of these functions on $x$ is suppressed.
We proceed by splitting the spread direction $d$ into two components: $d^{\mathrm{c}}$, which represents the spread due to constant flux, and $d^{\mathrm{w}}$, which represents spread due to wind-assisted flux.  This split gives the relaxed set:
\begin{equation}
    \mathcal{S}^{\angle}(w,x) := \left \{d + x : \begin{array}{l}
        d = d^{\mathrm{c}} + d^{\mathrm{w}}\\
        \norm{d^{\mathrm{c}}}_2 \leq V(x)\\
        \norm{d^{\mathrm{w}}}^2_2 \leq 2\sigma (\norm{w}_2) \ip{d^{\mathrm{w}}}{w}
    \end{array} \right \}.
\end{equation}
If $d^{\mathrm{c}}$ and $d^{\mathrm{w}}$ are parallel and $B = 1$, this set is exactly $\mathcal{S}^\mathrm{r}(w,x)$.  The relaxation is due to the possibility that the constant and wind-assisted spread are in different directions (hence the superscript $\angle$), and the replacement of the exponent $B$ with $1$.  Proposition~\ref{prop:relaxedSpread} establishes that this set relaxes the true spread set and characterizes it exactly as a Euclidean ball.  

\begin{proposition}
    \label{prop:relaxedSpread}
    Fix some $\{w,x\} \subset \mathbb{R}^2$.  Let $B \geq 1$ and $(C,V(x)) \geq 0$. Then, $\mathcal{S}^{\mathrm{r}}(w,x) \subseteq \mathcal{S}^{\angle}(w,x)$.  Further, 
    \begin{equation*}
        \mathcal{S}^{\angle}(w,x) = \mathcal{B}_{\tau(\norm{w}_2)} (\sigma(\norm{w}_2) w + x).
    \end{equation*}
\end{proposition}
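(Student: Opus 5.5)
I would prove the ball characterization first and then derive the inclusion from it. Write $\nu := \norm{w}_2$ and $\sigma := \sigma(\nu)$, which is nonnegative because $C, V(x) \geq 0$. The third constraint defining $\mathcal{S}^{\angle}(w,x)$ can be completed to a square:
\begin{equation*}
\norm{d^{\mathrm{w}}}_2^2 - 2\sigma \ip{d^{\mathrm{w}}}{w} \leq 0 \iff \norm{d^{\mathrm{w}} - \sigma w}_2^2 \leq \sigma^2 \nu^2 .
\end{equation*}
So the feasible $d^{\mathrm{w}}$ form exactly the ball $\mathcal{B}_{\sigma\nu}(\sigma w)$, and the feasible $d^{\mathrm{c}}$ form $\mathcal{B}_{V(x)}(0_2)$. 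Since $d = d^{\mathrm{c}} + d^{\mathrm{w}}$, the set of admissible $d$ is the Minkowski sum of these two balls. By Lemma~\ref{lemma:ballSum} this sum is $\mathcal{B}_{V(x) + \sigma\nu}(\sigma w)$. It then remains to check that $V(x) + \sigma(\nu)\nu = V(x) + \tfrac{CV(x)}{2}\nu^{B} = \tau(\nu)$. Translating by $x$ gives $\mathcal{S}^{\angle}(w,x) = \mathcal{B}_{\tau(\norm{w}_2)}(\sigma(\norm{w}_2) w + x)$. When $\nu = 0$ we interpret $\sigma(0)\cdot 0 = 0$, and the ball degenerates consistently, with radius $V(x)$ and center $x$.

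For the inclusion $\mathcal{S}^{\mathrm{r}}(w,x) \subseteq \mathcal{S}^{\angle}(w,x)$, take $d + x \in \mathcal{S}^{\mathrm{r}}(w,x)$. By the ball form, it suffices to show $\norm{d - \sigma w}_2 \leq V(x) + \sigma\nu$. I would decompose $d$ directly instead of searching for $d^{\mathrm{c}}, d^{\mathrm{w}}$. Write $\rho := \norm{d}_2$ and $p := \ip{d}{w}/\rho = \nu\cos\alpha$, where $\alpha$ is the angle between $d$ and $w$. The defining inequality gives $\rho \leq V(x)(1 + C p^{B})$, with the implicit requirement $p \geq 0$ whenever the power is taken for $d \neq 0$. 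The case $d = 0$ is trivial. Split $d = d^{\mathrm{c}} + d^{\mathrm{w}}$ along the direction $u := d/\rho$, putting $\min(\rho, V(x))$ into $d^{\mathrm{c}}$ and the remainder $\lambda u$ into $d^{\mathrm{w}}$, where $0 \leq \lambda \leq CV(x)p^{B}$. For $d^{\mathrm{w}} = \lambda u$, the constraint $\norm{d^{\mathrm{w}}}_2^2 \leq 2\sigma\ip{d^{\mathrm{w}}}{w}$ reduces to $\lambda \leq 2\sigma p = CV(x)\nu^{B-1}p$.

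The crux is therefore the scalar inequality $p^{B} \leq \nu^{B-1} p$ for $0 \leq p \leq \nu$. It is equivalent to $(p/\nu)^{B-1} \leq 1$, which holds precisely because $B \geq 1$ and $p \leq \nu$ by Cauchy--Schwarz. This is where the hypothesis $B \geq 1$ enters. I expect the main care to be needed here and in the edge cases $\nu = 0$ or $p = 0$, where one checks that $\lambda = 0$ works. These are handled by the paper's convention $0/0 = 0$.
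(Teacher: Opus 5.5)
Your proof is correct and follows essentially the same route as the paper. You use the same radial split $d^{\mathrm{c}} = \min(\norm{d}_2, V(x))\, d/\norm{d}_2$ with remainder $d^{\mathrm{w}}$ parallel to $d$, the same key inequality $(p/\nu)^B \leq p/\nu$ (from $B \geq 1$ and Cauchy--Schwarz), and the same identification of the $d^{\mathrm{w}}$-constraint with the ball $\mathcal{B}_{\sigma\nu}(\sigma w)$ before applying Lemma~\ref{lemma:ballSum}; the differences (proving the ball form first, and the unused remark that it ``suffices to show $\norm{d - \sigma w}_2 \leq V(x) + \sigma\nu$'') are purely cosmetic.
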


\begin{proof}
    Let $x' \in \mathcal{S}^{\mathrm{r}}(w,x)$ and $d := x' - x$, so $\norm{d}_2 \leq V(x) \left (1 + C \left (\frac{\langle d,w \rangle }{\norm{d}_2} \right ) ^B \right )$.  If $\norm{d}_2 \leq V(x)$, let $d^{\mathrm{c}} = d$ and $d^{\mathrm{w}} = 0_2$, certifying that $x' \in \mathcal{S}^{\angle}(w,x)$.  Otherwise, let $d^{\mathrm{c}} = V(x) d/\norm{d}_2$ and $d^{\mathrm{w}} = d - d^{\mathrm{c}}$.
    Then, 
    \begin{equation*}
        \begin{aligned}
            \norm{d^{\mathrm{w}}}_2 & = \norm{d}_2 - V(x)\\
            & \leq C V(x) \left (\frac{\ip{d}{w}}{\norm{d}_2} \right )^B\\
            & = C V(x) \norm{w}_2^B \left (\frac{\ip{d}{w}}{\norm{d}_2 \norm{w}_2} \right )^B\\
            & \leq C V(x) \norm{w}_2^{B} \left (\frac{\ip{d}{w}}{\norm{d}_2 \norm{w}_2} \right ),
        \end{aligned}
    \end{equation*}
    where the final inequality follows as $B \geq 1$ and $\ip{d}{w} / (\norm{d}_2 \norm{w}_2) \leq 1$.  Recall that $\ip{d}{w} \geq 0$ by our definition of the power constraint.  Now, as $d^{\mathrm{w}}$ is a positive scalar multiple of $d$, we have that $\ip{d}{w} / \norm{d}_2 = \ip{d^{\mathrm{w}}}{w} / \norm{d^{\mathrm{w}}}_2$, and again $x' \in \mathcal{S}^{\angle}(w,x)$.  Therefore, $\mathcal{S}^{\mathrm{r}}(w,x) \subseteq \mathcal{S}^{\angle}(w,x)$.

    To establish the second result, observe that $\mathcal{B}_{\norm{c}_2}(c) = \{d \,:\, \norm{d}_2^2 \leq 2\ip{c}{d} \}$.  Then, the final two constraints of $\mathcal{S}^\angle(w,x)$ are equivalent to $d^{\mathrm{c}} \in \mathcal{B}_{V(x)}(0_2)$ and $d^{\mathrm{w}} \in \mathcal{B}_{\tau(\norm{w}_2) - V(x)}(\sigma(\norm{w}_2) w)$.  The set $\mathcal{S}^{\angle}(w,x)$ is the Minkowski sum of these two balls and the point $x$.  By Lemma~\ref{lemma:ballSum}, we have the desired result.
\end{proof}

By the characterization in Proposition~\ref{prop:relaxedSpread}, it is clear that $\mathcal{S}^{\angle}(w,x)$ is convex for fixed $w$.  However, when we optimize simultaneously over $w$ and $\mathcal{S}^{\angle}(w,x)$, the dependence of the radius of the ball on $\norm{w}_2$ introduces nonconvexity.  To tackle this nonconvexity, we further relax $\mathcal{S}^{\angle}(w,x)$.  The proposed relaxation is exactly the convex hull of $\mathcal{S}^{\angle}(w,x)$ in the $(w,x)$ space if the nominal wind prediction $\overline{w} = 0_2$.  Proposition~\ref{prop:ballCH} defines the relaxed set $\mathcal{S}^{\circ}(w,x)$ and gives the convex hull property.  The set $\mathcal{S}^{\circ}(w,x)$ is also a Euclidean ball, with a slightly modified radius and center; for this reason, we call this set the $\emph{ball spread set}$.

\begin{proposition}
    \label{prop:ballCH}
    Fix some $\{x,\overline{w}\} \subset \mathbb{R}^2$.  Let $B \geq 1$ and $(C,V(x)) \geq 0$.  Then,
    \begin{equation*}
        \begin{aligned}
            \mathcal{A}_1 & :=\ \conv \{(w,x') \,:\, w \in \mathcal{B}_\varepsilon(\overline{w}),\, x' \in \mathcal{S}^{\angle}(w,x)\} \\
            & \ \ \subseteq \{(w,x') \,:\, w \in \mathcal{B}_\varepsilon(\overline{w}),\, x' \in \mathcal{S}^{\circ}(w,x)\} := \mathcal{A}_2,
        \end{aligned}
    \end{equation*}
    where 
    $\mathcal{S}^{\circ}(w,x) := \mathcal{B}_{\tau(\norm{\overline{w}}_2+\varepsilon)}(\sigma(\norm{\overline{w}}_2 + \varepsilon) w + x)$.
    If $\overline{w} = 0_2$, then $\mathcal{A}_1 = \mathcal{A}_2$.
\end{proposition}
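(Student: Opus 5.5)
Since $\mathcal{A}_2$ is convex, the inclusion $\mathcal{A}_1 \subseteq \mathcal{A}_2$ reduces to showing that the nonconvex set $\{(w,x') : w \in \mathcal{B}_\varepsilon(\overline{w}),\ x' \in \mathcal{S}^{\angle}(w,x)\}$ lies in $\mathcal{A}_2$. For equality when $\overline{w} = 0_2$, I will write every point of $\mathcal{A}_2$ as a convex combination of points with $\norm{w}_2 = \varepsilon$; on that sphere the two spread sets coincide.

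\textbf{Step 1 (convexity of $\mathcal{A}_2$).} Set $\overline{W} := \norm{\overline{w}}_2 + \varepsilon$. Then $\mathcal{A}_2$ is described by $\norm{w - \overline{w}}_2 \leq \varepsilon$ and $\norm{x' - x - \sigma(\overline{W}) w}_2 \leq \tau(\overline{W})$. The radius $\tau(\overline{W})$ is a constant and the center is affine in $w$. Both constraints are therefore second-order cone constraints, and $\mathcal{A}_2$ is convex.

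\textbf{Step 2 (pointwise inclusion).} Fix $w \in \mathcal{B}_\varepsilon(\overline{w})$ and let $a := \norm{w}_2$, so $a \leq \overline{W}$. By Proposition~\ref{prop:relaxedSpread}, $\mathcal{S}^{\angle}(w,x) = \mathcal{B}_{\tau(a)}(\sigma(a) w + x)$. By the triangle inequality (equivalently, Lemma~\ref{lemma:ballSum}), $\mathcal{B}_r(c) \subseteq \mathcal{B}_{R}(c')$ holds whenever $\norm{c - c'}_2 + r \leq R$. It therefore suffices to show
\begin{equation*}
    |\sigma(\overline{W}) - \sigma(a)|\, a + \tau(a) \leq \tau(\overline{W}).
\end{equation*}
Since $B \geq 1$, $\sigma$ is nondecreasing, so the absolute value can be dropped. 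After cancelling the common factor $C V(x)/2$, the condition becomes
\begin{equation*}
    \overline{W}^{B-1} a - a^{B} \leq \overline{W}^{B} - a^{B},
\end{equation*}
that is, $\overline{W}^{B-1} a \leq \overline{W}^{B}$. This holds because $a \leq \overline{W}$ and $\overline{W}^{B-1} \geq 0$. The degenerate cases $a = 0$ or $\overline{W} = 0$ are handled directly. Together with Step~1 this gives $\mathcal{A}_1 \subseteq \mathcal{A}_2$. This inequality is the only step with real content, and I expect it to be the main (if mild) obstacle: the crucial point is the monotonicity of $\sigma$, which requires $B \geq 1$.

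\textbf{Step 3 (equality when $\overline{w} = 0_2$).} Now $\overline{W} = \varepsilon$. Take $(w,x') \in \mathcal{A}_2$ and write $x' = x + \sigma(\varepsilon) w + b$ with $\norm{b}_2 \leq \tau(\varepsilon)$.
\begin{itemize}
    \item Choose any line through $w$. It meets the sphere $\{\norm{u}_2 = \varepsilon\}$ at two points $w_1, w_2$, and $w = \lambda w_1 + (1-\lambda) w_2$ for some $\lambda \in [0,1]$. If $\norm{w}_2 = \varepsilon$, simply take $w_1 = w_2 = w$.
    \item Define $x'_i := x + \sigma(\varepsilon) w_i + b$. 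Because $\norm{w_i}_2 = \varepsilon$, Proposition~\ref{prop:relaxedSpread} gives $\mathcal{S}^{\angle}(w_i,x) = \mathcal{B}_{\tau(\varepsilon)}(\sigma(\varepsilon) w_i + x)$, so $x'_i \in \mathcal{S}^{\angle}(w_i,x)$ and each $(w_i, x'_i)$ lies in the generating set.
    \item Since $x'$ is affine in $w$ once $b$ is fixed, $\lambda (w_1,x'_1) + (1-\lambda)(w_2,x'_2) = (w,x')$.
\end{itemize}
Hence $(w,x') \in \mathcal{A}_1$, so $\mathcal{A}_2 \subseteq \mathcal{A}_1$, and combined with Step~2, $\mathcal{A}_1 = \mathcal{A}_2$.
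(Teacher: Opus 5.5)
Your proposal is correct and follows the paper's proof. Like the paper, you show the pointwise inclusion $\mathcal{S}^{\angle}(w,x) \subseteq \mathcal{S}^{\circ}(w,x)$ and use convexity of $\mathcal{A}_2$; for $\overline{w} = 0_2$ you write $w$ as a convex combination of two points on the sphere $\norm{u}_2 = \varepsilon$, where the two spread sets coincide.

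The differences are cosmetic. The paper loosens $\norm{w}_2^{B-1}$ to $(\norm{\overline{w}}_2+\varepsilon)^{B-1}$ inside the $d^{\mathrm{w}}$ constraint, whereas you compare ball centers with the triangle inequality; both reduce to your inequality $\overline{W}^{B-1}\norm{w}_2 \leq \overline{W}^{B}$. Your arbitrary-chord construction with a shared offset $b$ also covers $w = 0_2$, a case the paper's antipodal choice $\pm\varepsilon w/\norm{w}_2$ leaves implicit.
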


\begin{proof}
Let $w \in \mathcal{B}_\varepsilon(\overline{w})$ and $(d,d^{\mathrm{c}},d^{\mathrm{w}})$ satisfy the constraints of $\mathcal{S}^{\angle}(w,x)$. Then,
\begin{equation*}
    \begin{aligned}
        \norm{d^{\mathrm{w}}}_2^2 & \leq C V(x) \norm{w}_2^{B-1} \ip{d^{\mathrm{w}}}{w}\\
        & \leq C V(x) (\norm{\overline{w}}_2 + \varepsilon)^{B-1} \ip{d^{\mathrm{w}}}{w},
    \end{aligned}
\end{equation*}
as $B - 1 \geq 0$.
Then, following the logic in the proof of Proposition~\ref{prop:relaxedSpread}, $d^{\mathrm{w}} \in \mathcal{B}_{\tau(\norm{\overline{w}}_2 + \varepsilon) - V(x)}(\sigma(\norm{\overline{w}}_2 + \varepsilon)w)$ and $d^{\mathrm{c}} \in \mathcal{B}_{V(x)}(0_2)$.  Then, $d + x$ is in the sum of these Euclidean balls and the point $x$, which is exactly $\mathcal{S}^{\circ}(w,x)$ by Lemma~\ref{lemma:ballSum}.  Therefore, $\mathcal{S}^{\angle}(w,x) \subseteq \mathcal{S}^{\circ}(w,x)$.  As $\mathcal{A}_2$ is a convex set, we have $\mathcal{A}_1 \subseteq \mathcal{A}_2$.

Next, assume $\overline{w} = 0_2$.  If $\varepsilon = 0$, it holds that $\mathcal{S}^{\angle}(\overline{w},x) = \mathcal{S}^{\circ}(\overline{w},x)$, and $\mathcal{A}_1 = \mathcal{A}_2$ trivially.  Otherwise, let $w \in \mathcal{B}_\varepsilon(0_2)$ and
\begin{equation*}
    \begin{aligned}
        w_\mathrm{A} = \frac{\varepsilon w}{\norm{w}_2}, && && w_\mathrm{B} = -\frac{\varepsilon w}{\norm{w}_2},\\
        \lambda_{\mathrm{A}} = \frac{\varepsilon + \norm{w}_2}{2\varepsilon}, && \text{  and  } && \lambda_{\mathrm{B}} = \frac{\varepsilon - \norm{w}_2}{2\varepsilon}.
    \end{aligned}
\end{equation*}
Note that $\lambda \geq 0$ and $\lambda_{\mathrm{A}} + \lambda_{\mathrm{B}}=1$, so $\lambda$ are convex multipliers.  Additionally, $\lambda_{\mathrm{A}} w_{\mathrm{A}} + \lambda_{\mathrm{B}} w_{\mathrm{B}} = w$ and $\{w_{\mathrm{A}},w_{\mathrm{B}}\} \subset \mathcal{B}_{\varepsilon}(0_2)$.  By Lemma~\ref{lemma:ballSum} and Proposition~\ref{prop:relaxedSpread}, 
\begin{equation*}
    \begin{aligned}
        & \lambda_{\mathrm{A}} \mathcal{S}^{\angle}(w_{\mathrm{A}},x) + \lambda_{\mathrm{B}} \mathcal{S}^{\angle}(w_{\mathrm{B}},x)\\
        = \ & \mathcal{B}_{\lambda_{\mathrm{A}} \tau(\varepsilon)} (\lambda_{\mathrm{A}} \sigma (\varepsilon) w_{\mathrm{A}} + \lambda_{\mathrm{A}} x)\\
        & + \mathcal{B}_{\lambda_{\mathrm{B}} \tau(\varepsilon)} (\lambda_{\mathrm{B}} \sigma (\varepsilon) w_{\mathrm{B}} + \lambda_{\mathrm{B}} x)\\
        = \ & \mathcal{B}_{\tau(\varepsilon)}(\sigma(\varepsilon) w + x) = \mathcal{S}^{\circ}(w,x).
    \end{aligned}
\end{equation*}
Then, $\{w\} \times \mathcal{S}^{\circ}(w,x) \subseteq \mathcal{A}_1$.  As this holds for all $w \in \mathcal{B}_{\varepsilon}(0_2)$, we have that $\mathcal{A}_2 \subseteq \mathcal{A}_1$ when $\overline{w} = 0_2$.
\end{proof}

Figure~\ref{fig:ballSpreadRegions} depicts the geometry of the initial relaxed spread set $\mathcal{S}^{\angle}(w,0_2)$ and of the ball spread set $\mathcal{S}^{\circ}(w,0_2)$.  The parameters $(C,V)$ are calibrated as in Section~\ref{sec:data} and the exponent $B$ is rounded to $1$.  In this figure, the maximum rate of spread is larger than that in Figure~\ref{fig:IPSpreadRegions} as it is now computed with $B = 1$, instead of $B < 1$.  We find that the relaxed set $\mathcal{S}^{\angle}(w,0_2)$ corresponds almost exactly with the true set, while the ball spread set introduces some error when the wind speed is not at its upper bound. 

%%%%%%%%%%%%%%%%%% Figure %%%%%%%%%%%%%%%%%%
\begin{figure}[tp]
    \centering
    \includegraphics{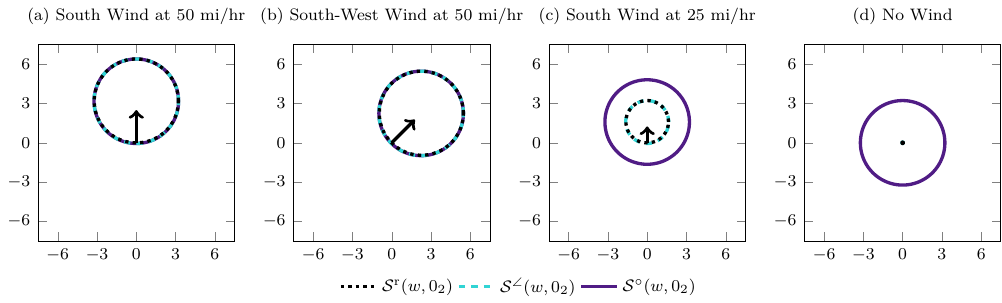}
    \caption{Comparison of the Rothermel, relaxed, and ball spread sets.  Wind velocity $w$ is fixed in each plot.  We set $B = 1$, $\overline{w} = 0_2$, and $\varepsilon = 50\ \text{mi/hr}$.  Units are miles; arrows show the wind direction and relative magnitude.}
    \label{fig:ballSpreadRegions}
\end{figure}
%%%%%%%%%%%%%%%%%% Figure %%%%%%%%%%%%%%%%%%

\subsection{Choosing a Relaxation}
\label{subsec:chooserelax}

We have introduced two distinct convex relaxations of the Rothermel spread set.  Depending on the context, it may be preferable to use one relaxation over the other.  Here, we provide some guidelines on which relaxation may be more accurate; these guidelines are summarized in Table~\ref{tbl:relaxationComparison}.

%%%%%%%%%%%%%%%%%% Figure %%%%%%%%%%%%%%%%%%
\begin{table}[tp]
    \centering
    \begin{tabular}{@{}cccc@{}}
        \toprule
         & $B \ll 1$ & $ B \sim 1$ & $B \gg 1$ \\
         \midrule
         $C \overline{W}^B \sim 1$ & $\mathcal{S}^{\langle \rangle}$ & $\mathcal{S}^{\langle \rangle}$ & $\mathcal{S}^{\circ}$ \\
         $C \overline{W}^B \not \sim 1 $ &  $\mathcal{S}^{\langle \rangle}$ & $\mathcal{S}^{\circ}$ & $\mathcal{S}^{\circ}$ \\
        \bottomrule
    \end{tabular}
    \caption{Preferred relaxation of $\mathcal{S}^{\mathrm{r}}$ by relationship between the parameters $(B,C,\overline{W})$.}
    \label{tbl:relaxationComparison}
\end{table}
%%%%%%%%%%%%%%%%%% Figure %%%%%%%%%%%%%%%%%%

The convexity of the inner product spread set $\mathcal{S}^{\langle \rangle}$ relies on the condition $B \leq 1$.  If $B > 1$, it must be rounded down to $1$ to use this relaxation.  On the other hand, the ball spread set $\mathcal{S}^{\circ}$ is designed under the assumption that $B \geq 1$, and $B$ must be rounded up to $1$ if $B < 1$.  Therefore, if $B \gg 1$, it is preferable to use the ball spread set, and if $B \ll 1$, the inner product spread set is preferable.

Additionally, the ball spread set omits the relationship between the base direction of spread and the wind-assisted direction of spread.  If the magnitude of one of these components dominates the magnitude of the other, the relaxation that generates the ball spread set is relatively tight.  If the magnitudes are on the same order, this relaxation is looser.  That is, if $1 \sim C\overline{W}^B$, where $\overline{W}$ is the maximum wind speed, then the inner product spread set is preferable, and otherwise the ball spread set is preferable.

%%Explicit Mixed-Integer Conic Formulations
\section{Explicit Mixed-Integer Conic Formulations}
\label{sec:micpFormulation}

This section details a formulation that implements the logic of problem~\eqref{abstractOpt}.  In Section~\ref{subsec:micpFormulation}, we describe a mixed-binary description of the outage indicator definition \eqref{eq:constrOutage} and of the region-dependent rate of spread functions $V(x)$.  In Section~\ref{subsec:SOCPower}, we describe a second-order cone approximation of the power cone constraints to facilitate implementation of the set $\mathcal{S}^{\langle \rangle}(w,x)$.

\subsection{Mixed-Integer Conic Description of the Adversarial Model}
\label{subsec:micpFormulation}

In problem~\eqref{abstractOpt}, the constraints \eqref{eq:constrMonotonic}-\eqref{eq:constrWind} can easily be written as linear or quadratic constraints, and \eqref{eq:constrBin} enforces the binary domain.  It remains to formulate the objective \eqref{eq:obj} and the constraints \eqref{eq:constrSpread}-\eqref{eq:constrOutage}.  To permit reformulation, we make the following assumption.
\begin{assumption}  
\label{assump:data}
The following properties of the model data hold:
    \begin{enumerate}[label=\alph*.]
        \item The regions $\mathcal{P}_r$ and geometries $\mathcal{L}_e$ are nonempty polytopes for all $r \in \mathcal{R}$ and $e \in \mathcal{E}$;\label{assump:boundedRegions}
        \item The union $\bigcup_{r \in \mathcal{R}} \mathcal{P}_r$ is a box and contains $\mathcal{L}_e$ for all $e \in \mathcal{E}$; and\label{assump:regionCoverage}
        \item There is an optimal solution to \eqref{abstractOpt} with $o_{eT} = 1$ for all $e \in \mathcal{E}$.\label{assump:universalOutage}
    \end{enumerate}
\end{assumption}
 
Assumption~\hyperref[assump:boundedRegions]{\ref*{assump:data}\ref*{assump:boundedRegions}} requires that both the rate of spread regions and grid element geometries are nonempty, polyhedral, and bounded, which holds by construction (see Section~\ref{sec:data}).  Assumption~\hyperref[assump:regionCoverage]{\ref*{assump:data}\ref*{assump:regionCoverage}} establishes that the rate of spread regions cover all element geometries and all areas between elements.  Assumption~\hyperref[assump:universalOutage]{\ref*{assump:data}\ref*{assump:universalOutage}} requires that some optimal solution has all elements outaged by the final period $T$.  If the objective values $c_t(\cdot)$ are nondecreasing, this property will be satisfied if $T$ is chosen to be sufficiently large.

To formulate the objective \eqref{eq:obj}, we introduce binary variables $\overline{o}_{\mathcal{E}'t}$ for every subset $\mathcal{E}' \subseteq \mathcal{E}$.  These variables reformulate the product terms in the objective, indicating whether the subset $\mathcal{E}'$ is outaged.  The variables $\overline{o}$ are subject to the following constraints:
\begin{subequations}
    \label{eq:constrOutageComb}
    \begin{align}
        & \overline{o}_{\mathcal{E}'t} \leq o_{et} \quad && \forall e \in \mathcal{E}',\ \mathcal{E}' \subseteq \mathcal{E},\ t \in \mathcal{T}, \label{eq:constrOutageCombDef}\\
        & \overline{o}_{\mathcal{E}'t} \leq 1 - o_{et} \quad && \forall e \in \mathcal{E} \setminus \mathcal{E}',\ \mathcal{E}' \subseteq \mathcal{E},\ t \in \mathcal{T}, \label{eq:constrOutageComplCombDef}\\
        & \sum_{\mathcal{E}' \subseteq \mathcal{E}} \overline{o}_{\mathcal{E}'t} \leq 1 \quad && \forall t \in \mathcal{T}, \label{eq:constrOutageCombTotal}\\
        & \overline{o}_{\mathcal{E}'t} \in \{0,1\} \quad && \forall \mathcal{E}' \subseteq \mathcal{E},\ t \in \mathcal{T}. \label{eq:constrOutageCombBin}
    \end{align}
\end{subequations}
Constraints~\eqref{eq:constrOutageCombDef}~and~\eqref{eq:constrOutageComplCombDef} ensure that the variable $\overline{o}_{\mathcal{E}'t}$ is $1$ only if exactly the elements $e \in \mathcal{E}'$ are outaged, and constraint~\eqref{eq:constrOutageCombTotal} requires that only one combination of elements is outaged in each period.
In the augmented variable space, the objective \eqref{eq:obj} can be written as a linear function:
\begin{equation*}
    \sum_{\mathcal{E}' \subseteq \mathcal{E}}\ \sum_{t \in \mathcal{T}} c_t(\mathcal{E}') \ \overline{o}_{\mathcal{E}'t}.
\end{equation*}
For the minimum time-to-outage objective, note that only variables $\{\overline{o}_{\mathcal{E}t}\}_{t \in \mathcal{T}}$ need to be introduced.  

Next, we address the spread constraint~\eqref{eq:constrSpread}.  The coefficient $V(x)$ requires indicators of the region $r \in \mathcal{R}$ containing $x$.
By Assumption~\hyperref[assump:boundedRegions]{\ref*{assump:data}\ref*{assump:boundedRegions}}, each region is a polytope $\mathcal{P}_r$; let the H-representation of these polytopes be given by $\mathcal{P}_r = \{y \,:\, A_r y \leq b_r\}$.  The Balas formulation for the union of polytopes \citep{jeroslow1984modelling,balas1985disjunctive} gives a well-behaved representation for the desired indicators.  This representation introduces region indicator variables $p_{ert}$ and proxy variables $x_{ert} \in \mathbb{R}^2$, constrained by the following logic:
\begin{subequations}
    \begin{align}
        & A_r x_{ert} \leq b_r p_{ert} \quad && \forall e \in \mathcal{E},\ r \in \mathcal{R},\ t \in \mathcal{T}, \label{eq:constrRegionBounds}\\
        & \sum_{r \in \mathcal{R}} p_{ert} = 1 \quad && \forall e \in \mathcal{E},\ t \in \mathcal{T}, \label{eq:constrRegionCount}\\
        & \sum_{r \in \mathcal{R}} x_{ert} = x_{et} \quad && \forall e \in \mathcal{E},\ t \in \mathcal{T} \label{eq:constrRegionTotal}\\
        & p_{ert} \in \{0,1\} \quad && \forall e \in \mathcal{E},\ r \in \mathcal{R},\ t \in \mathcal{T}. \label{eq:constrRegionBin}
    \end{align}
\end{subequations}
As the indicators $p_{ert}$ are binary and $\mathcal{P}_r$ are bounded, constraint~\eqref{eq:constrRegionBounds} requires either that $p_{ert} = 0$ and $x_{ert} = 0_2$, or that $p_{ert} = 1$ and $x_{ert} \in \mathcal{P}_r$.  Constraint~\eqref{eq:constrRegionCount} enforces that exactly one region indicator is active, and constraint~\eqref{eq:constrRegionTotal} then requires that $x_{et}$ is in the active region.  Under Assumption~\hyperref[assump:regionCoverage]{\ref*{assump:data}\ref*{assump:regionCoverage}}, this formulation does not unduly constrain the feasible values of the fire locations $x_{et}$.

We apply a similar idea as the Balas formulation to implement the dependence of the spread rate on the active region.  Let variables $d_{ert} \in \mathbb{R}^2$ be the proxy variables for the spread direction $d$.  Then, we enforce
\begin{subequations}
    \begin{align}
        & \norm{d_{ert}}_{\infty} \leq \mu_r \overline{R}  p_{ert} \quad && \forall e \in \mathcal{E},\ r \in \mathcal{R},\ t \in \mathcal{T},\label{eq:constrRegionSpreadBounds}\\
        & \sum_{r \in \mathcal{R}} d_{ert} = x_{e,t+1} - x_{et} \quad && \forall e \in \mathcal{E},\ t \in \mathcal{T}.\label{eq:constrRegionSpreadTotal}
    \end{align}
\end{subequations}
Constraint~\eqref{eq:constrRegionSpreadBounds} requires that the spread contribution from a region is zero if the corresponding indicator is zero, and otherwise enforces a componentwise bound on the magnitude of the spread $d_{ert}$, where the maximum spread distance $\overline{R}$ is reduced by the multiplier $\mu_r$ for region $r$.  Constraint~\eqref{eq:constrRegionSpreadTotal} enforces that the change in fire location is exactly the spread contribution from the active region.

For the ball and inner product spread sets, we impose constraints that enforce the spread behavior exactly by region.  For the ball spread set $\mathcal{S}^{\circ}(w,x)$, we require
\begin{equation}
    \begin{aligned}
        & \norm{\frac{CV}{2}(\norm{\overline{w}_t}_2 + \varepsilon)^{(B - 1)} w_t - \sum_{r \in \mathcal{R}} \frac{1}{\mu_r} d_{ert}}_2\\
        & \quad \leq V + \frac{CV}{2} (\norm{\overline{w}_t}_2 + \varepsilon)^B \quad \forall e \in \mathcal{E},\ t \in \mathcal{T}.
    \end{aligned}
\end{equation}
As exactly one of $\{d_{ert}\}_{r \in \mathcal{R}}$ is nonzero, multiplying this constraint by $\mu_r$ for the active region $r$ yields the description of $\mathcal{S}^{\circ}(w_t,x_{et})$.  This constraint is a quadratic constraint in the variables $(w,d)$.  Otherwise, for the inner product spread set $\mathcal{S}^{\langle \rangle}(w,x)$, we introduce variables $(\gamma_{1et},\gamma_{2et},\gamma_{3et},z_{et})$ and impose
\begin{subequations}
    \begin{align}
        & \norm{d_{ert}}_2 \leq (\mu_r) ^{\frac{1}{1+B}}\gamma_{1et} \quad && \forall e \in \mathcal{E},\ r \in \mathcal{R},\ t \in \mathcal{T}, \label{eq:constrIPNorm}\\
        & \gamma_{1et} \leq (\gamma_{2et} + \gamma_{3et})^{\frac{1}{1+B}} \quad && \forall e \in \mathcal{E},\ t \in \mathcal{T},\\
        & \gamma_{2et} \leq V \cdot (\gamma_{1et})^B \quad && \forall e \in \mathcal{E},\ t \in \mathcal{T},\\
        & \gamma_{3et} \leq CV \cdot (\ip{\overline{w}_t}{d_{et}} + z_{et})^B \quad && \forall e \in \mathcal{E},\ t \in \mathcal{T},\\
        & (d_{et}, w_t, z_{et}) \in \overline{\mathcal{Z}} \quad && \forall e \in \mathcal{E},\ t \in \mathcal{T}.
    \end{align}
\end{subequations}
These constraints give the power conic reformulation of the inner product spread set described in Proposition~\ref{prop:conicRothermel}.  Constraint~\eqref{eq:constrIPNorm} handles the rescaling of the coefficient $V(x)$ for the active region, so that the variables $\gamma$ and other constraints do not need to be duplicated across regions.  Note that, although an explicit formulation of $\overline{\mathcal{Z}}$ is not provided here, the implicit variables $\omega_{i}$ will be shared across indices $e \in \mathcal{E}$, as the wind vector variable $w_t$ remains the same as this index changes.  This approach helps to tighten the relaxation $\mathcal{Z}^{\mathrm{2M}}_2$ as described in Section~\ref{subsubsec:SOMIP}.

Last, we describe the outage indicator definitions \eqref{eq:constrOutage}.  Let $\mathcal{L}_e = \conv ( \{\ell_{ej}\}_{j=1}^J)$ be a polytope in V-representation.  If element $e$ is a power line, $J = 2$ and $\ell_{ej} \in \mathbb{R}^2$ give the start and end points of the line.  If $e$ is a bus, $J = 1$ and $\ell_{ej}$ gives its location.  This construction also applies to more complicated polyhedral geometries.  First, we require that the fire location for element $e$ reaches the set $\mathcal{L}_e$ in the final period $T$.  With variables $\lambda_{ej}$, this requirement is written as 
\begin{subequations}
    \label{eq:constrfinalGeometryIntersection}
    \begin{align}
        & \sum_{j=1}^J \lambda_{ej} \ell_{ej} = x_{eT} \quad && \forall e \in \mathcal{E},\\
        & \sum_{j=1}^J \lambda_{ej} = 1 \quad && \forall e \in \mathcal{E},\\
        & \lambda_{ej} \geq 0 \quad && \forall e \in \mathcal{E},\ j \in \Brack{J}.
    \end{align}
\end{subequations}
Next, if element $e$ is outaged at period $t$, we do not permit its fire location $x_{et}$ to move between periods.  Then, if an element is outaged at period $t$, by the monotonicity constraint~\eqref{eq:constrMonotonic}, its fire location will not move for the remainder of the horizon and must already lie in the geometry $\mathcal{L}_e$, due to constraints~\eqref{eq:constrfinalGeometryIntersection}.  Under Assumption~\hyperref[assump:universalOutage]{\ref*{assump:data}\ref*{assump:universalOutage}}, there exists an optimal solution with all elements outaged in period $T$, so this reformulation is not restrictive and is equivalent to \eqref{eq:constrOutage}-\eqref{eq:constrMonotonic}.  For the ball spread set $\mathcal{S}^{\circ}(w,x)$, this logic is enforced by 
\begin{equation}
    \label{eq:constrBallSpreadOutage}
    \begin{aligned}
        \norm{d_{ert}}_{\infty} \leq \mu_r \overline{R} o_{et} \quad \forall e \in \mathcal{E},\ r \in \mathcal{R},\ t \in \mathcal{T}.
    \end{aligned}
\end{equation}
For the inner product spread set $\mathcal{S}^{\langle \rangle}(w,x)$, the corresponding constraint is
\begin{equation}
    \begin{aligned}
        \gamma_{1et} \leq \overline{R} o_{et} \quad \forall e \in \mathcal{E},\ t \in \mathcal{T}.
    \end{aligned}
\end{equation}
As $\gamma_{1et}$ already provides provides a bound on the norm of the spread components $d_{ert}$ for all regions, we do not repeat this constraint for every region $r$, as is necessary in constraint~\eqref{eq:constrBallSpreadOutage}.

Under this characterization of the outage indicators, once an element is outaged, the region containing its fire point will not change.  With this intuition, we impose a set of valid constraints:
\begin{equation}
    \label{eq:constrValidRegionChange}
    \begin{aligned}
        |p_{er,t+1} - p_{ert}| \leq 1 - o_{et} \quad \forall e \in \mathcal{E},\ r \in \mathcal{R},\ t \in \mathcal{T}.
    \end{aligned}
\end{equation}

\subsection{Second-Order Conic Representation of the Power Cone}
\label{subsec:SOCPower}

Some commercial optimization solvers do not offer native support of power cone constraints.  To circumvent this issue, we make use of known SOC representations of the power cone with a rational exponent \citep[\eg,][Sec.~3.3]{ben2001lectures}.  Specifically, for integers $(N,D,\rho)$ with $D \leq \Pi :=2^\rho$, the rational power constraint $x \leq y ^ \frac{N}{D}$ is written as a geometric mean:
\begin{equation*}
    x^\Pi \leq x^{\Pi - D} y^N 1^{D-N}.
\end{equation*}
We represent the geometric mean constraint equivalently using a result of \citet[Prop.~5]{morenko2013p}, which introduces $\rho-1$ proxy variables and $\rho$ new SOC constraints.  This representation is more concise than standard reformulations, which require $\Pi-1$ new variables and constraints. 

%%Evaluative Optimal Power Flow Model
\section{Evaluative Optimal Power Flow Model}
\label{sec:opf}

For analysis of wildfire impact on power delivery, we align a security-constrained direct-current (DC) optimal power flow (OPF) model of the transmission grid with the affected area.  This model minimizes the costs associated with power generation and unmet demand, while respecting an approximation of the transmission dynamics.  For an overview of OPF models, see \cite{frank2016introduction}.  This section provides a high-level description of our OPF model; specific mathematical details are provided in Appendix~\ref{app:explictOPF}.

We adopt the operational portion of the capacity expansion model of \cite{musselman2025climate}.  This model optimizes the cost of power generation and storage against the amount of \emph{load shed}, \ie, the quantity of unmet demand.  The model is multi-period over horizon $\mathcal{T}$, where consecutive periods are linked by the amount of stored energy and consumption of hydroelectric power.  Power flows are modeled under the DC approximation.  We let $\mathcal{F}^\mathrm{b}$ denote the feasible set of this base operational model, where $p \in \mathcal{F}^{\mathrm{b}}$ represents a feasible generation, storage, and load shedding decision.  The cost of decision $p$ is given by $O^{\mathrm{b}}(p)$.

To model the impact of element outages, we incorporate contingency costs into the OPF model.  Specifically, let $\mathcal{K}$ denote the set of contingencies, where $(\mathcal{E},t) \in \mathcal{K}$ is the contingency in period $t$ with elements $\mathcal{E}$ outaged.  After the outage occurs, we allow the system to adapt in that period by shedding additional load, reducing generation, and reducing the amount of storage charging or discharging, so that the DC power flows are satisfied.  We treat reductions to the charging of storage in the recourse decision as load shedding, subject to the relevant costs.  The base operational decision gives a minimum amount of load shed and maximum amounts of generation and storage discharge in the contingency; that is, neither the amount of generation nor the amount of load served may increase relative to the base decision after the contingency occurs.  This corrective action gives a continuous model of disconnecting load, generation, and storage from the grid as a response to the contingency.  For contingency $(\mathcal{E},t) \in \mathcal{K}$, let the set $\mathcal{F}^{\mathrm{ctg}}_{\mathcal{E}t}(p)$ denote the feasible set of post-contingency decisions under base operational decision $p$, where the elements in $\mathcal{E}$ are outaged.  These decisions consider post-contingency operations only in the period $t$ when the contingency occurs.  The contingency cost is due to the cost of shedding load, which is given by the function $O^{\mathrm{s}}_{\mathcal{E}t}(p^{\mathrm{ctg}}_{\mathcal{E}t})$ for feasible decision $p^{\mathrm{ctg}}_{\mathcal{E}t} \in \mathcal{F}^{\mathrm{ctg}}_{\mathcal{E}t}(p)$.
The optimal contingency cost as a function of the base operational decision $p$ is given by
\begin{equation}
    O^{\mathrm{ctg}}_{\mathcal{E}t}(p) := \min_{p^{\mathrm{ctg}}_{\mathcal{E}t} \in \mathcal{F}^{\mathrm{ctg}}_{\mathcal{E}t}(p)} \quad O^{\mathrm{s}}_{\mathcal{E}t}(p^{\mathrm{ctg}}_{\mathcal{E}t}).
\end{equation}
As the recourse decision permits decreasing power generation and shedding load, the contingency subproblems are feasible for any $p \in \mathcal{F}^{\mathrm{b}}$.

The security-constrained DC-OPF model is as follows:
\begin{equation}
    \label{DCOPF}
    \begin{aligned}
        \min_{p \in \mathcal{F}^{\mathrm{b}}} \quad \frac{1}{T} O^{\mathrm{b}}(p) + \frac{1}{TK}\sum_{(\mathcal{E},t) \in \mathcal{K}} O^{\mathrm{ctg}}_{\mathcal{E}t}(p),
    \end{aligned}
\end{equation}
where $K$ gives the number of contingencies per period.  The problem~\eqref{DCOPF} is a linear program.  The objective value of this model is the average per-period operational cost plus the average contingency cost.

%%Experimental Setting and Calibration
\section{Experimental Setting and Calibration}
\label{sec:data}

We consider the impact of wildfires on the California power grid, motivated by several recent large and destructive fires.  Specifically, we design three experimental settings, which correspond to the Park fire of 2024, and the Eaton and Palisades fires of January 2025.  These experiments use a model of the California power grid, choose contingency sets near each of these historical fires, and calibrate the wildfire spread model to chaparral, a flammable shrub found throughout California.

\subsection{Power Grid and Contingency Data}
\label{subsec:gridData}
We take the nodal electric transmission network from \cite{musselman2025climate}, which is a modestly aggregated version of the California Test System \citep{taylor2024CATS} with 3,928 buses and 5,581 lines.  This network uses real power grid geography and parameters where available, and otherwise substitutes representative simulated data.  Following the methodology of \cite{musselman2025climate}, we impose realistic temporal data, including predicted load and availability of renewable and hydroelectric resources, for the first 24 hours after the wildfire ignitions ($T = 24$).  The ignition times for each fire are given in Table~\ref{tbl:ignition}.  This approach captures seasonal and hourly variations in the behavior of the power grid around the time of each fire.  Load is predicted using hourly temperature from the ERA5 reanalysis \citep{hersbach2023era5}, and renewable generation availability is taken from the System Advisor Model of the National Renewable Energy Laboratory \citep{blair2018system} with simulated hourly weather data from the Energy Exascale Earth System California Regionally Refined Model \citep{zhang2024leveraging}.  Hydroelectric availability is approximated with data from 2019, taken from the water system model of \cite{yates2024modeling}.  We define the cost of load shedding as $\$10,000 / \text{MWh}$ \citep{miso2025voll}.

%%%%%%%%%%%%%%%%%% Figure %%%%%%%%%%%%%%%%%%
\begin{table}[tp]
    \centering
    \begin{tabular}{@{}crrr@{}}
    \toprule
     & Year & Date & Time \\
    Fire &  &  &  \\
    \midrule
    Park & 2024 & July 24 & 14:00 \\
    Palisades & 2025 & January 7 & 10:00 \\
    Eaton & 2025 & January 7 & 18:00 \\
    \bottomrule
    \end{tabular}
    \caption{Wildfire ignition times (Pacific time).}
    \label{tbl:ignition}
\end{table}
%%%%%%%%%%%%%%%%%% Figure %%%%%%%%%%%%%%%%%%

For each fire, we select the set of contingency elements $\mathcal{E}$ by identifying five lines that are near the area affected by wildfire and that incur a large load shed when outaged.  Specifically, we solve a contingency-free DC-OPF~\eqref{DCOPF} (with $\mathcal{K} = \emptyset$) to an optimal solution $p^*$.  We prescreen for lines that intersect a 5-mile buffer around the final wildfire perimeter, then select for the set $\mathcal{E}$ the five intersecting lines that incur the largest post-contingency load shed under the base solution $p^*$ when outaged; that is, the five lines in the buffer with the largest value of $\sum_{t \in \mathcal{T}} \mathcal{C}^{\mathrm{ctg}}_{\{j\}t}(p^*)$.
Here, $j$ denotes the line index.  As bus outages are equivalent to the outage of all lines incident to the bus, our experiments only consider line contingencies.

Figure~\ref{fig:geography} superimposes the geometry of the transmission network on the state of California.  For each fire, it shows the final perimeter and ignition point, and highlights the five lines that are selected for the contingency set $\mathcal{E}$.  The fire perimeters are compared against the union of the Rothermel spread sets for the first 24 hours of fire spread, using the calibration, nominal wind forecast, and true ignition point described in Section~\ref{subsec:calibration}.  For these sets, the coefficient $V(x)$ takes the average of the region multipliers described in Section~\ref{subsec:regions}, weighted by area.  These spread set unions do not cover the entirety of the fire perimeters for two main reasons: first, the sets summarize 24 hours of fire spread, while the boundaries show the area burned over the entire duration of the fire; and second, spatial variation in the rate of spread due to the coefficient $V(x)$ is averaged out.  In fact, the Park fire boundary at 24~hours after ignition is entirely contained in this spread set union.  The adversarial model expands the area reachable by fire beyond these sets by adding uncertainty in the wind forecast, using relaxations of the Rothermel spread sets, and considering the aforementioned spatial variation.  The Eaton and Palisades fires experienced winds that blew towards urban areas and the California coast, which limited the actual fire spread in those directions.  On the other hand, the Park fire grew significantly in the direction of the wind.  Actual fire spread is also reduced by mitigation efforts, which are not captured in our analysis.

%%%%%%%%%%%%%%%%%% Figure %%%%%%%%%%%%%%%%%%
\begin{figure}[tp]
    \centering
    \includegraphics{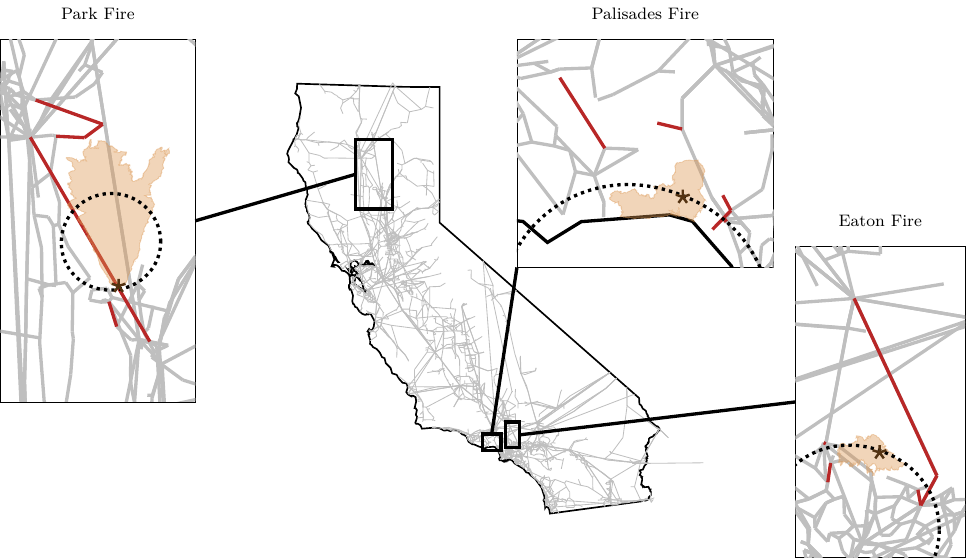}
    \caption{Synthetic power network topology \citep{musselman2025climate} overlaid on the state of California. Final perimeters of the Park, Eaton, and Palisades fires are shown in orange, and lines in the selected contingency set are highlighted in red.  The ignition location of each fire is marked by an asterisk.  The union of the Rothermel spread sets over the first 24 hours of fire spread are shown with dotted lines.}
    \label{fig:geography}
\end{figure}
%%%%%%%%%%%%%%%%%% Figure %%%%%%%%%%%%%%%%%%

\subsection{Fire Spread Model Calibration}
\label{subsec:calibration}

We consider two parameter settings for wildfire uncertainty, which we name the \emph{low flexibility} (LF) and \emph{high flexibility} (HF) settings.  In the low flexibility setting, we fix the ignition point $x_0$ to the true ignition point and use a small wind uncertainty set, centered around a nominal forecast.  In the high flexibility setting, the ignition point remains unfixed, and the wind uncertainty set is large, permitting the wind vectors to point in any direction, with bounded wind speed.

Specifically, in the low flexibility setting, the wind uncertainty set is centered around a nominal forecast $\overline{w}_t$, where the direction $\overline{w}_t / \norm{\overline{w}_t}_2$ is the wind direction at 10 meters above ground level at the end of each hour, and the magnitude $\norm{\overline{w}_t}_2$ takes the value of the maximum 3-second wind gust speed over the hour, in the area affected by wildfire.  These data are collected from the ERA5 reanalysis \citep{hersbach2023era5}.  We use an uncertainty budget of $\varepsilon = 10 \text{ mi/hr}$ and the ignition point $x_0$ is fixed in the optimization problem.  In the high flexibility setting, the nominal forecast $\overline{w}_t = 0_2$ and $\varepsilon$ is $10 \text{ mi/hr}$ larger than the maximum nominal wind speed in the low flexibility setting: $47 \text{ mi/hr}$ for the Park fire, $46 \text{ mi/hr}$ for the Eaton fire, and $50 \text{ mi/hr}$ for the Palisades fire.  The ignition point $x_0$ is not fixed and is treated as a variable in the optimization.

We calibrate the values $(C,B)$, which parametrize the flux multiplier due to wind, to align with data for the California chaparral fuel bed.  We select a packing ratio of 0.016 (dimensionless) and a surface-area-to-volume ratio of $2,500\text{ m}^{-1}$ \citep{zhou2005modeling}.  With the wind speed given in mi/hr, these values imply that $C = 2.5010$ and $B = 0.9093$.
For the inner product spread set, the power cone constraints must have rational exponents with small denominators (see Section~\ref{subsec:SOCPower}).  For the exponent $B$, we use the  approximation $B = 10/11$ with $\rho = 4$, which introduces an approximation error on the order of $10^{-4}$.  We approximate the exponent $1/(1+B) = 0.5238$ by $8/15$, with an approximation error on the order of $10^{-2}$.  

The base rate of spread $V$ is determined from historical Park fire boundaries.  For each direction $d \in \mathbb{R}^2$, we compute the 24-hour flux multiplier using the Rothermel spread set:
\begin{equation*}
    \sum_{t \in \mathcal{T}} \left (1 + C \cdot \left (\frac{\ip{d}{\overline{w}_t}}{\norm{d}_2} \right )^B \right ).  
\end{equation*}
Dividing the actual distance of fire spread in direction $d$ over the first 24 hours after ignition by this multiplier implies a value for the coefficient $V$; we take the maximum value over all directions.  Finally, we rescale this value by the inverse of the average region multiplier $\mu_r$, weighted by area (see Section~\ref{subsec:regions}).  This gives a calibrated base spread rate of $V = 0.05 \text{ mi/hr}$.
The small magnitude of this value suggests that most of the fire spread is driven by wind; in fact, at a wind speed of $20 \text{ mi/hr}$, the maximum fire spread rate parallel to the wind direction is approximately $2 \text{ mi/hr}$ under these parameters.  As the exponent $B$ is close to $1$ and $C\overline{W}^B \gg 1$, this parameterization makes the ball spread sets tighter than the inner product spread sets.  For the ball spread sets, we use $B=1$, but maintain the calibrated values of $C$ and $V$.  Experiments across all three fires use the same calibrated value of $V$; differences in the rate of spread between the fires are modeled by rate of spread regions, which are computed as described in Section~\ref{subsec:regions}.

\subsection{Data-Driven Rate of Spread Regions}
\label{subsec:regions}

To adjust the base rate of spread $V$ by location, we scale proportionally by the \emph{wildland fire potential index} (WFPI; \citealt{burgan1998fuel,usgs2025wfpi}).  The WFPI is a raster dataset, composed of a grid with a value assigned to each cell, published daily at a spatial resolution of 1 km.  The WFPI value is a measure of vegetation flammability, computed from inputs that summarize the moisture, temperature, and fuel mixture at a location.  The WFPI correlates with the occurrence of large fires and with the proportion of small fires that become large \citep{preisler2009forecasting}, and has been used as a metric of fire risk in power grid operations \citep{taylor2022framework, piansky2024long, greenough2025wildfire}.  We use the WFPI as a proxy for the rate of fire spread, relative to the calibrated baseline value $V$.

We take the WFPI dataset for the ignition date of each fire and rescale the values to remove the impact of wind, using the function $\nu \rightarrow (1 + 0.6s/35)^{-1} \nu$,
where $s$ is the wind speed in knots \citep{usgs2025wfpi}.  This rescaling prevents the impact of wind from being counted twice, as the wind velocity is also incorporated into the spread sets $\mathcal{S}(w,x)$.  For rescaling, we use the 24-hour average wind speed $\norm{\overline{w}_t}_2$ from the low flexibility experimental setting.  The rescaled WFPI values fall on the interval $[0,100]$, where higher values indicate higher fire potential.

We aggregate the WFPI raster grid into the polyhedral regions $\mathcal{P}_r$ by training an optimal classification tree to predict the WFPI value associated with a coordinate pair.  Classification trees categorize data by a series of affine rules, then assign a label to the data according to the assigned cluster, in this case, the average of the values in the cluster.  After training, the clusters summarize the data, so that the error between the true value at each point and the label assigned by the clusters is minimized.  Each cluster, indexed by $r \in \mathcal{R}$, has an associated set of affine inequalities, which determine membership, and a scalar label.  This construction aligns with our desired properties for the rate of spread regions: the inequalities define a polyhedral region $\mathcal{P}_r$ and have an associated value, denoted $f_r$.  We define the spread rate multipliers as the normalized label: $\mu_r = f_r / 100$.  Due to the structure of the classification tree, the regions $\{\mathcal{P}_r\}_{r \in \mathcal{R}}$ partition $\mathbb{R}^2$.  

We train trees by the tree alternating optimization (TAO) algorithm for bivariate classification trees \citep{kairgeldin2024bivariate}.  Bivariate classification trees consider no more than two features in each affine rule; however, as the WFPI coordinates are two-dimensional, this property is not restrictive.  The TAO algorithm sequentially optimizes subproblems at each node of the tree to minimize the regression error of the labels assigned to the clustered data.  The subproblems to assign labels compute averages over values in a cluster, and the subproblems to compute affine rules enumerate orientations (\ie, angles) of the rule in two dimensions and select the best rule by the downstream error of the implied classification.  We train trees to minimize the regularized mean squared regression error, where the regularization penalizes nodes with a nontrivial affine rule (\ie, where the data is actually split).  The training uses a regularization parameter of $25$ and a tree depth of $3$, yielding a maximum of $8$ regions ($|\mathcal{R}| \leq 8$).  We train 25 randomly initialized trees and keep the single tree with the smallest error.

Figure~\ref{fig:WFPIRegions} compares the true WFPI data against the regions $\mathcal{P}_r$ and associated labels $f_r$ extracted from the trained classification trees, in the area around each of the Eaton, Palisades, and Park fires.  The regions effectively summarize high-level geographic trends in the WPFI dataset.

%%%%%%%%%%%%%%%%%% Figure %%%%%%%%%%%%%%%%%%
\begin{figure}[tp]
    \centering
    \includegraphics{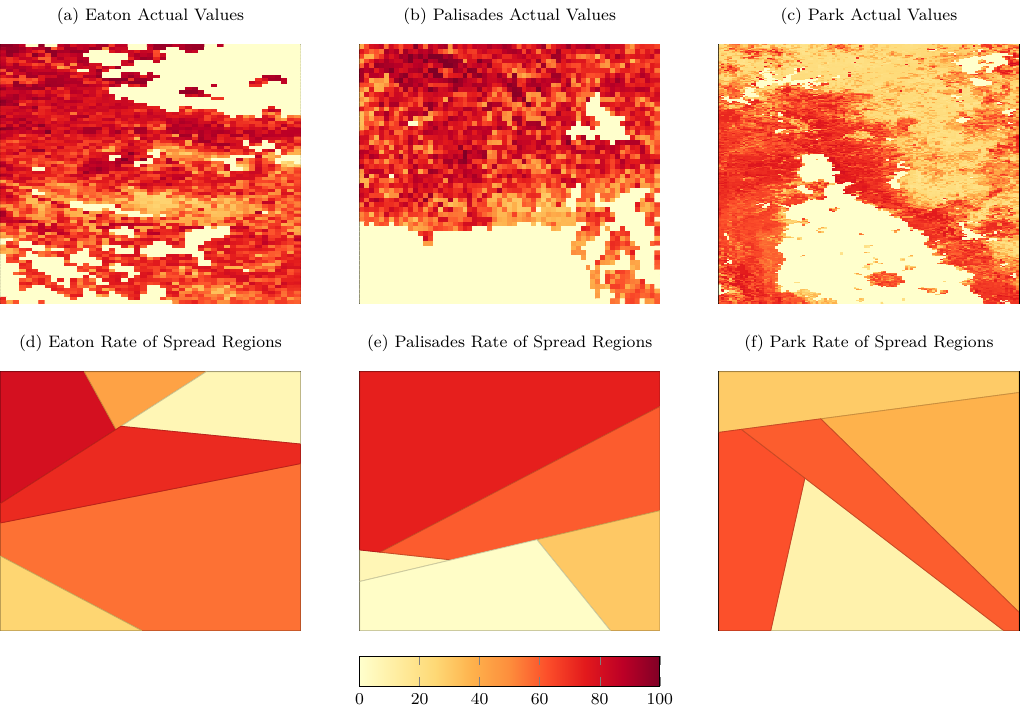}
    \caption{WFPI data (top) and rate of spread regions (bottom) extracted from optimal classification trees for each fire.}
    \label{fig:WFPIRegions}
\end{figure}
%%%%%%%%%%%%%%%%%% Figure %%%%%%%%%%%%%%%%%%

%%Computational Experiments and Results
\section{Computational Experiments and Results}
\label{sec:experiments}

This section details the results of our computational experiments.  In Section~\ref{subsec:mintimeresults}, we solve minimum time-to-outage fire spread models to screen the contingencies that are added to the optimal power flow model.  In Section~\ref{subsec:sequentialresults}, we construct a contingency sequence that incurs the largest amount of load shed.

For both experiments, we use the inputs described in Section~\ref{sec:data}.  Optimization models are solved by Gurobi~12.0.3, with feasibility and optimality tolerances of $10^{-5}$.   Tolerances are loosened from the default due to the complexity of the nonlinearities in the MICP~\eqref{abstractOpt} and the size of the DC-OPF~\eqref{DCOPF}.  The mixed-integer conic models~\eqref{abstractOpt} are solved with outer approximation of the conic constraints (\verb|MIQCPMethod| = 1), constraint aggregation disabled, and emphasis on improving the lower bound (\verb|MIPFocus| = 2).  Additionally, branching on the outage indicator variables $o_{et}$ is prioritized.  The DC-OPF~\eqref{DCOPF} is solved with the barrier method and with rescaling disabled (\verb|ScaleFlag| = 0).  Experiments are conducted in Python~3.10 on dual 48-core AMD EPYC 9474F@3.60 GHz processors with 386~GB of RAM.

\subsection{Contingency Screening for Optimal Power Flow}
\label{subsec:mintimeresults}

To screen contingencies, we solve the adversarial wildfire model \eqref{abstractOpt} for every subset of contingency elements $\overline{\mathcal{E}} \subseteq \mathcal{E}$ with the minimum time-to-outage objective~\eqref{eq:minTimeObj}.  For each subset, we solve a model with the ball spread sets $\mathcal{S}^{\circ}$ and with the inner product spread sets $\mathcal{S}^{\langle \rangle}$, and in the low flexibility and high flexibility settings.  Table~\ref{tbl:minTimeSummary} reports the number of variables and constraints by the number of contingency elements, and gives summary statistics for the model solve time.  These statistics summarize performance across the three fires and all contingency subsets of a fixed size.  The inner product spread sets require more variables and constraints than the ball sets, due to the reformulation of the power cone with second-order cones.  All models solve in under 7~minutes, with average solve times under 2~minutes.  There is no consistent trend in solution time across the spread sets and the flexibility setting, but solve times generally increase as the number of contingency elements increases.

%%%%%%%%%%%%%%%%%% Figure %%%%%%%%%%%%%%%%%%
\begin{table}[tp]
    \centering
    \begin{tabular}{@{}ccrrrrrrrrrr@{}}
    \toprule
     &  & \multicolumn{4}{c}{} & \multicolumn{6}{c}{Solve Time (s)} \\
    \cmidrule(lr){7-12}	
     &  & \multicolumn{2}{c}{\# Variables} & \multicolumn{2}{c}{\# Constraints} & \multicolumn{3}{c}{LF} & \multicolumn{3}{c}{HF} \\
    \cmidrule(lr){3-4}	\cmidrule(lr){5-6}	\cmidrule(lr){7-9}	\cmidrule(lr){10-12}	
     &  & Binary & All & Conic & All & Min & Mean & Max & Min & Mean & Max \\
    Spread Set & $|\overline{\mathcal{E}}|$ &  &  &  &  &  &  &  &  &  &  \\
    \midrule
    \multirow[c]{5}{*}{$\mathcal{S}^{\circ}$} & 1 & 168 & 1,007 & 47 & 2,627 & 0.1 & 0.2 & 0.5 & 0.0 & 0.1 & 0.1 \\
     & 2 & 312 & 1,894 & 70 & 5,159 & 0.1 & 0.6 & 2.1 & 0.1 & 4.7 & 59.5 \\
     & 3 & 456 & 2,781 & 93 & 7,691 & 0.3 & 1.2 & 3.6 & 0.2 & 25.4 & 180.2 \\
     & 4 & 600 & 3,668 & 116 & 10,223 & 0.4 & 2.0 & 6.8 & 3.6 & 72.9 & 399.2 \\
     & 5 & 744 & 4,555 & 139 & 12,755 & 0.7 & 2.6 & 5.0 & 4.5 & 79.8 & 229.1 \\
    \cline{1-12}
    \multirow[c]{5}{*}{$\mathcal{S}^{\langle \rangle}$} & 1 & 168 & 2,896 & 611 & 3,557 & 0.1 & 1.6 & 6.1 & 0.0 & 0.1 & 0.2 \\
     & 2 & 312 & 5,624 & 1,150 & 6,947 & 0.3 & 7.6 & 39.4 & 0.1 & 1.4 & 7.6 \\
     & 3 & 456 & 8,352 & 1,689 & 10,337 & 0.5 & 31.9 & 284.8 & 0.6 & 4.6 & 27.7 \\
     & 4 & 600 & 11,080 & 2,228 & 13,727 & 1.4 & 56.7 & 211.3 & 1.0 & 8.1 & 32.9 \\
     & 5 & 744 & 13,808 & 2,767 & 17,117 & 2.5 & 70.5 & 127.5 & 3.2 & 16.0 & 39.9 \\
    \bottomrule
    \end{tabular}
    \caption{Summary statistics for minimum time-to-outage outage models.  Number of variables and constraints are reported for the Eaton fire with low flexibility.}
    \label{tbl:minTimeSummary}
\end{table}
%%%%%%%%%%%%%%%%%% Figure %%%%%%%%%%%%%%%%%%

Figure~\ref{fig:optimalOutageTime} shows the distribution of minimum time-to-outage values over subsets $\overline{\mathcal{E}} \subseteq \mathcal{E}$ of the contingency elements, stratified by the cardinality $k$ of the subset, high and low flexibility setting, and choice of spread set.  As is expected, minimum outage times are larger in the low flexibility (LF) setting than the high flexibility (HF) setting, meaning the fire takes longer to reach all contingency elements when the ignition point is fixed and wind uncertainty is small.  The outage times generally increase with the cardinality $k$ of the contingency, as there are more elements to be reached by fire.  Outage times are also larger for the ball spread set than the inner product spread set, as the ball spread set is tighter than the inner product spread set under our model calibration, as discussed in Sections~\ref{subsec:chooserelax}~and~\ref{subsec:calibration}.  In some settings, the largest outage time ($k = 5$ and $\overline{\mathcal{E}} = \mathcal{E}$) exceeds the outage times for all single-element contingencies ($|\overline{\mathcal{E}}| = 1$).  This is due to the interaction of the fire spread with the wind vector and ignition point: these variables impact the spread towards all contingency elements, increasing the minimum outage time when multiple elements are modeled simultaneously.  This result demonstrates the significance of modeling the correlative impact of the wind and ignition site in multi-element contingencies.

%%%%%%%%%%%%%%%%%% Figure %%%%%%%%%%%%%%%%%%
\begin{figure}[tp]
    \centering
    \includegraphics{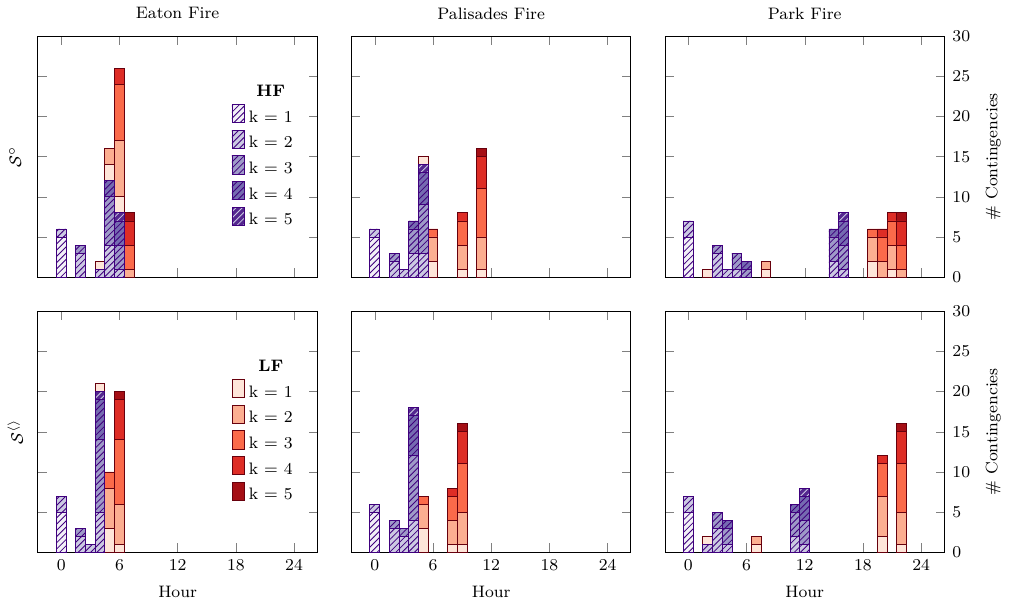}
    \caption{Distribution of minimum time-to-outage values by fire, spread set, flexibility, and number of contingency elements.}
    \label{fig:optimalOutageTime}
\end{figure}
%%%%%%%%%%%%%%%%%% Figure %%%%%%%%%%%%%%%%%%

Figure~\ref{fig:exampleSol} shows the optimal wind vectors and fire point locations for a minimum time-to-outage solution of the Eaton fire with high flexibility and the ball spread set, for a contingency with all five elements in $\mathcal{E}$.  Figure~\hyperref[fig:exampleSol]{\ref*{fig:exampleSol}a} shows the optimal wind vectors by period, which blow from the north in the first four hours, then from the west in the fifth hour, with speeds near the upper bound $\varepsilon$.  Figure~\hyperref[fig:exampleSol]{\ref*{fig:exampleSol}b} plots the optimal fire locations $x_{et}$, which track fire spread over time.  The ignition point is north of most contingency elements, and the fire spreads to the southeast and southwest from the ignition point, assisted by the north winds.  In the final period, the fire has nearly reached the elements to the west, so the west wind encourages greater fire spread to the east, allowing the fire to reach the remaining elements.

%%%%%%%%%%%%%%%%%% Figure %%%%%%%%%%%%%%%%%%
\begin{figure}[tp]
    \centering
    \begin{subfigure}[t]{0.4\textwidth}
        \centering
        \includegraphics[valign=t]{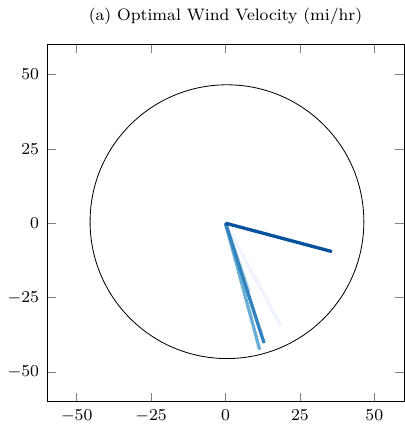}
    \end{subfigure}
    \hspace{1.5em}
    \begin{subfigure}[t]{0.5\textwidth}
        \centering
        \includegraphics[valign=t]{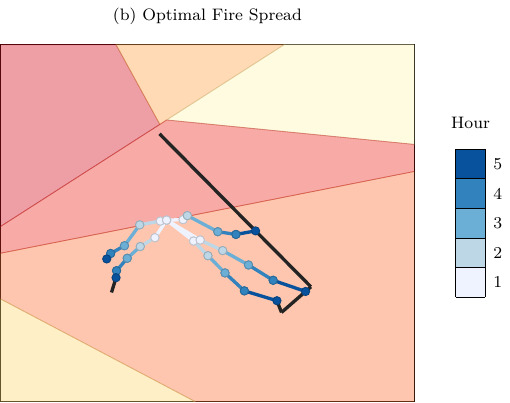}
    \end{subfigure}
    \caption{An optimal solution for the 5-element contingency of the Eaton fire with high flexibility and the ball spread set.  (a)~Hourly optimal wind velocity. (b)~Hourly pointwise fire spread (blue) reaches the power lines (black) in the contingency; rate of spread regions from Figure~\ref{fig:WFPIRegions} are shown.}
    \label{fig:exampleSol}
\end{figure}
%%%%%%%%%%%%%%%%%% Figure %%%%%%%%%%%%%%%%%%

After evaluating the minimum time-to-outage, we select contingencies to include in the sets $\mathcal{K}$ and solve the DC-OPF problem \eqref{DCOPF}.  We compare three regimes for contingency selection: including no contingencies, including every contingency at every time, and thresholding, where contingencies are included after their minimum time-to-outage.  When including no contingencies, $\mathcal{K} = \emptyset$, and when including all contingencies, $\mathcal{K} = \mathcal{T} \times 2^{\mathcal{E}}$.  To threshold contingencies, let $t^*(\overline{\mathcal{E}})$ be the minimum time-to-outage of contingency $\overline{\mathcal{E}}$, computed by solving \eqref{abstractOpt}.  Then, $\mathcal{K} = \{(t,\overline{\mathcal{E}}) \in \mathcal{T} \times 2^{\mathcal{E}} \,:\, t \geq t^*(\overline{\mathcal{E}}) \}.$  In effect, thresholding only adds contingencies in periods where they feasibly can occur by our fire spread model.  Table~\ref{tbl:opfScreening} shows the results of these experiments for the ball spread set with low flexibility.  We first observe that thresholding contingencies results in many fewer contingencies included, relative to including all contingencies.  Due to the reduction in model size, solve times are significantly reduced, by over 3x for the Park fire.  The contingency selection regimes do not drastically change the base operational cost $O^{\mathrm{n}}(p)$, although the costs are slightly lower for the cases with thresholded or no contingencies.  As the wildfire impacts are localized but the DC-OPF models the entirety of California, any changes in base cost are likely dominated by the scale of the cost across the entire system.  The base objective in the Park fire is relatively large due to load shedding in the base operational decision, which does not occur in the Eaton and Palisades models.  

The contingency screening strategies differ in the amount of post-contingency load shed (\ie, contingency costs).  We report the load shed in all contingencies (\ie, $\sum_{\overline{\mathcal{E}} \subseteq \mathcal{E},\ t \in \mathcal{T}} O^{\mathrm{ctg}}_{\overline{\mathcal{E}}t}(p)$) and only from contingencies in the thresholded set (\ie, $\sum_{\overline{\mathcal{E}} \subseteq \mathcal{E},\ t \geq t^*(\overline{\mathcal{E}})} O^{\mathrm{ctg}}_{\overline{\mathcal{E}}t}(p)$).  Note that these metrics give the total load shed over multiple possible contingencies, which may have overlapping outages.  Relative to the no contingency baseline, adding thresholded contingencies and all contingencies yields essentially the same reduction in load shedding on the contingencies in the thresholded set, reducing load shed by over 97\% for the Park fire and 70\% for the Eaton fire.  Post-contingency load shed is never eliminated, as this may require costly dispatch or be infeasible due to congestion and islanding.  Including all contingencies differs from the thresholding approach in the scale of load shed from all contingencies, indicating that load shedding for the additional contingencies can be alleviated by slightly increasing the base cost and exerting additional computational effort.  However, as the fire spread models suggest that these additional contingencies cannot reasonably be caused by wildfire, the additional cost and effort is unnecessary to mitigate wildfire impacts on the power grid.

%%%%%%%%%%%%%%%%%% Figure %%%%%%%%%%%%%%%%%%
\begin{table}[tp]
    \centering
    \begin{tabular}{@{}ccrrrrr@{}}
    \toprule
     &  & \multicolumn{3}{c}{} & \multicolumn{2}{c}{Load Shed (MWh)} \\
    \cmidrule(lr){6-7}	
     &  & $|\mathcal{K}|$ & Solve Time (s) & Base Cost (\$/hr) & Threshold & All \\
    Fire & Contingencies &  &  &  &  &  \\
    \midrule
    \multirow[c]{3}{*}{Eaton} & All & 744 & 276.4 & 644,355 & 11,902 & 17,683 \\
     & Threshold & 556 & 230.3 & 636,464 & 11,902 & 24,685 \\
     & None & 0 & 1.8 & 628,926 & 39,593 & 54,188 \\
    \cline{1-7}
    \multirow[c]{3}{*}{Palisades} & All & 744 & 715.1 & 627,704 & 38,868 & 70,256 \\
     & Threshold & 455 & 391.3 & 627,102 & 38,868 & 70,612 \\
     & None & 0 & 1.8 & 626,201 & 39,565 & 71,540 \\
    \cline{1-7}
    \multirow[c]{3}{*}{Park} & All & 744 & 274.0 & 10,787,083 & 28 & 309 \\
     & Threshold & 148 & 71.3 & 10,784,391 & 65 & 9,316 \\
     & None & 0 & 1.6 & 10,781,720 & 2,719 & 20,359 \\
    \bottomrule
    \end{tabular}
    \caption{Solution of DC-OPF by contingency screening strategy.  Solution time, base operational costs, and post-contingency load shed are compared.  Experiments are shown for the ball spread set with low flexibility.}
    \label{tbl:opfScreening}
\end{table}
%%%%%%%%%%%%%%%%%% Figure %%%%%%%%%%%%%%%%%%

\subsection{Sequential Contingency Design}
\label{subsec:sequentialresults}

We also run experiments to generate a sequence of element outages that maximize the amount of load shed over the horizon $T$.  To compute coefficients, we solve the operational DC-OPF \eqref{DCOPF} with no contingencies ($\mathcal{K} = \emptyset$) to optimal solution $p^*$.  We then assign objective weights to every subset of contingency elements in every period according to the amount of load shed incurred by that outage: $c_t(\overline{\mathcal{E}}) = O^{\mathrm{ctg}}_{\overline{\mathcal{E}}t}(p^*).$
Under these weights, the optimal objective value of \eqref{abstractOpt} gives the largest amount of load shed that can be caused by a wildfire that sequentially outages power lines.

Table~\ref{tbl:sequentialSummary} gives the number of variables and constraints for these models.  Compared to the minimum time-to-outage models (Table~\ref{tbl:minTimeSummary}), these models contain additional binary variables to generate indicators for each subset of elements, as in the reformulation~\eqref{eq:constrOutageComb}.  Due to differences in the number of rate of spread regions $|\mathcal{R}|$, the model size differs by fire.

%%%%%%%%%%%%%%%%%% Figure %%%%%%%%%%%%%%%%%%
\begin{table}[tp]
    \centering
    \begin{tabular}{@{}ccrrrr@{}}
    \toprule
     &  & \multicolumn{2}{c}{\# Variables} & \multicolumn{2}{c}{\# Constraints} \\
    \cmidrule(lr){3-4}	\cmidrule(lr){5-6}	
     &  & Binary & All & Conic & All \\
    Fire & Spread Set &  &  &  &  \\
    \midrule
    \multirow[c]{2}{*}{Eaton} & $\mathcal{S}^{\circ}$ & 1,608 & 5,419 & 139 & 16,711 \\
     & $\mathcal{S}^{\langle \rangle}$ & 1,608 & 14,672 & 2,767 & 21,073 \\
    \cline{1-6}
    \multirow[c]{2}{*}{Palisades} & $\mathcal{S}^{\circ}$ & 1,488 & 4,819 & 139 & 14,761 \\
     & $\mathcal{S}^{\langle \rangle}$ & 1,488 & 13,727 & 2,652 & 19,353 \\
    \cline{1-6}
    \multirow[c]{2}{*}{Park} & $\mathcal{S}^{\circ}$ & 1,488 & 4,819 & 139 & 14,761 \\
     & $\mathcal{S}^{\langle \rangle}$ & 1,488 & 13,727 & 2,652 & 19,353 \\
    \bottomrule
    \end{tabular}
    \caption{Number of variables and constraints in the sequential contingency models by spread set and fire, under the low flexibility setting.}
    \label{tbl:sequentialSummary}
\end{table}
%%%%%%%%%%%%%%%%%% Figure %%%%%%%%%%%%%%%%%%

Table~\ref{tbl:sequentialSolution} reports the solve time and optimal objective of these models.  In this table, the optimal load shed is the optimal objective value, whereas the base load shed gives the amount of load shed if all contingency elements $\mathcal{E}$ are outaged at the minimum outage time $t^*(\mathcal{E})$; that is, 
$\sum_{t \geq t^*(\mathcal{E})} O^{\mathrm{ctg}}_{\mathcal{E}t}(p^*)$.
In these experiments, the inner product spread sets require more computation time: the inner product models solve within 25~minutes, whereas the ball models solve within 2~minutes.  In the low flexibility setting, the ball spread sets give smaller objective values than the inner product sets; however, this trend reverses in the high flexibility setting.  Although the ball spread sets are generally tighter than the inner product sets, due to the rounding of exponent $B$ to $1$, the maximum spread rate is larger for the ball spread sets than the inner product sets.  This allows for the ball spread sets to produce larger optimal objective values in some cases.  Considering a sequence of outages (the optimal load shed) results in larger load shed than considering simultaneous outages (the base load shed).  For the Park fire, the load shed is up to 21x larger, and we find increases of up to 19\% for the other fires.  These results emphasize the importance of considering outage sequences when analyzing the impact of wildfire on the power grid.

%%%%%%%%%%%%%%%%%% Figure %%%%%%%%%%%%%%%%%%
\begin{table}[tp]
    \centering
    \begin{tabular}{@{}cccrrr@{}}
    \toprule
     &  &  &  & \multicolumn{2}{c}{Load Shed (MWh)} \\
    \cmidrule(lr){5-6}	
     &  &  & Solve Time (s) & Base & Optimal \\
    Fire & Spread Set & Flexibility &  &  &  \\
    \midrule
    \multirow[c]{4}{*}{Eaton} & \multirow[c]{2}{*}{$\mathcal{S}^{\circ}$} & HF & 13.7 & 2,173 & 2,591 \\
     &  & LF & 4.1 & 2,031 & 2,236 \\
    \cline{2-6}
     & \multirow[c]{2}{*}{$\mathcal{S}^{\langle \rangle}$} & HF & 72.1 & 2,406 & 2,474 \\
     &  & LF & 25.0 & 2,173 & 2,295 \\
    \cline{1-6}
    \multirow[c]{4}{*}{Palisades} & \multirow[c]{2}{*}{$\mathcal{S}^{\circ}$} & HF & 15.3 & 3,428 & 3,961 \\
     &  & LF & 106.7 & 2,239 & 2,617 \\
    \cline{2-6}
     & \multirow[c]{2}{*}{$\mathcal{S}^{\langle \rangle}$} & HF & 20.6 & 3,602 & 3,960 \\
     &  & LF & 1,357.7 & 2,655 & 2,951 \\
    \cline{1-6}
    \multirow[c]{4}{*}{Park} & \multirow[c]{2}{*}{$\mathcal{S}^{\circ}$} & HF & 14.0 & 249 & 935 \\
     &  & LF & 7.2 & 38 & 797 \\
    \cline{2-6}
     & \multirow[c]{2}{*}{$\mathcal{S}^{\langle \rangle}$} & HF & 40.1 & 465 & 923 \\
     &  & LF & 1,027.1 & 38 & 797 \\
    \bottomrule
    \end{tabular}
    \caption{Sequential contingency model solve time and optimal load shed over 24 hours, compared against base load shed from outaging all five elements simultaneously at the minimum outage time.}
    \label{tbl:sequentialSolution}
\end{table}
%%%%%%%%%%%%%%%%%% Figure %%%%%%%%%%%%%%%%%%

Figure~\ref{fig:exampleSequential} depicts the number of outaged elements by period in the optimal outage sequence.  Results are shown for the high flexibility setting and the ball spread set.  The sequence prioritizes outaging some elements in early periods, while other elements are outaged closer to the end of the horizon.  In all cases, the fire ignition point is on the geometry of some power line, outaging one element in the first period.  For the Eaton fire, the next three elements are outaged in the sixth period, and the final element in period 14.  In these solutions, elements with a large impact on the grid are outaged early in the sequence, while elements with lesser impact have lower priority and are reached by the fire later.  For all fires, the outage time of the fifth element is always later than the minimum time-to-outage of the contingency (see Figure~\ref{fig:optimalOutageTime}), as the optimal sequential solution selects wind vectors and an ignition point which outage elements that cause the most load shed early, at the cost of outaging less impactful elements after the minimum outage time.

%%%%%%%%%%%%%%%%%% Figure %%%%%%%%%%%%%%%%%%
\begin{figure}[tp]
    \centering
    \includegraphics{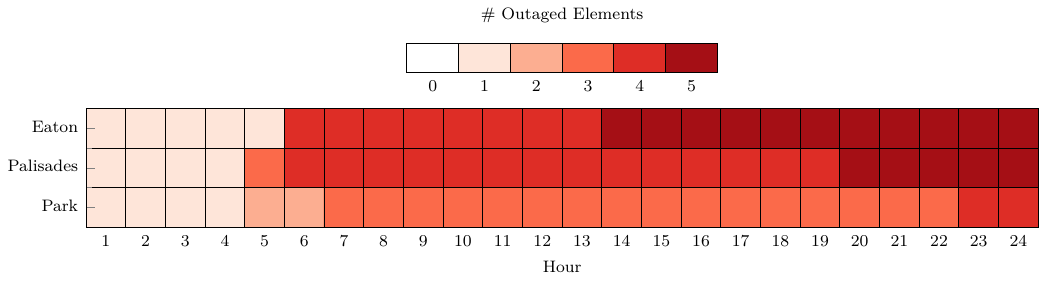}
    \caption{Number of outaged elements by hour in optimal sequential contingency solutions, in the high flexibility setting with the ball spread set.}
    \label{fig:exampleSequential}
\end{figure}
%%%%%%%%%%%%%%%%%% Figure %%%%%%%%%%%%%%%%%%

%%Conclusion
\section{Conclusion}
\label{sec:conclusion}

In this work, we propose an optimization model to describe adversarial wildfires that impart the worst-case impact on power grid infrastructure.  Inspired by the Rothermel model, we propose fire spread sets and convex conic relaxations that are compatible with mixed-integer conic programs.  These relaxations include a novel second-order conic constraint that relaxes the inner product over Euclidean balls.  We design three experiments that align with California wildfires and evaluate the minimum time-to-outage of multi-element contingencies and the maximum load shed of a sequence of element outages.  We apply the minimum time-to-outage objectives to screen contingencies for a security-constrained optimal power flow model, eliminating unrealistic contingencies while encouraging operational resilience of the power grid against wildfire.

This perspective on modeling adversarial wildfires represents a unique methodology for characterizing the worst-case fire behavior.  Although we apply this approach specifically to power grid analysis and operations, it is generalizable to many areas of research which consider the impact of wildfire, including wildfire prevention, suppression, and threat evaluation for population centers or other infrastructure.  Tight integration of our adversarial model into primary robust optimization models in a variety of settings will help promote decisions with greater resilience against worst-case wildfires.

\section*{Acknowledgement}
This work was performed under the auspices of the U.S. Department of Energy by Lawrence Livermore National Laboratory under Contract DE-AC52-07NA27344 and was supported in part by the LLNL LDRD Program under Project 25-SI-007.  M. Brun and X.A. Sun were supported in part by the Sloan Foundation.  The authors thank Nai Chiang, Juliette Franzman, Elizabeth Glista, Minda Monteagudo, Matthew Signorotti, and Tomas Valencia Zuluaga at Lawrence Livermore National Laboratory for their help with preparation of data and code.

\bibliographystyle{informs2014}
\bibliography{AdversarialWildfire.bib}

\newpage
%%Appendix
\appendix
\section{Appendix: Complete Formulation of DC-OPF}
\label{app:explictOPF}
In this appendix, we provide a mathematical description of the feasible sets and objective functions that compose the security-constrained DC-OPF~\eqref{DCOPF}.  This model is the operational portion of the capacity expansion model of \citeappendix{musselman2025climateappendix}.  Let $\mathcal{I}$ be the set of buses in the power grid and $\mathcal{J}$ be the set of lines, where $(i,j) \in \mathcal{J}$ represents a line connecting bus $i$ to bus $j$.  The set $\mathcal{G}$ gives the types of generation considered (\eg, natural gas, wind, hydroelectric).  We consider a multi-period model over periods $\mathcal{T}$, the same horizon as considered in the adversarial wildfire model~\eqref{abstractOpt}.

In the base operational model, let the variable $p^{\mathrm{g}}_{igt}$ denote the quantity of generation of type $g$ at bus $i$ in period $t$, the variable $p^{\mathrm{b}}_{it}$ denote the amount of power in storage, and the variable $p^{\mathrm{s}}_{it}$ denote the quantity of load shed.  We also introduce variables $p^{\Delta}_{it+}$ and $p^{\Delta}_{it-}$ for the amount of power charged and discharged, respectively, by storage.  These variables are constrained as follows:
\begin{subequations}
    \label{eq:opfconstrOperation}
    \begin{align}
        & p^{\mathrm{g}}_{igt} \leq \overline{P}^{\mathrm{g}}_{igt} \quad && \forall i \in \mathcal{I},\ g \in \mathcal{G},\ t \in \mathcal{T}, \label{eq:opfconstrPmax}\\
        & \sum_{t \in \mathcal{T}} p^{\mathrm{g}}_{igt} \leq \overline{H}^{\mathrm{g}}_{ig} \quad && \forall i \in \mathcal{I},\ g \in \mathcal{G},\label{eq:opfconstrPTotal}\\
        & p^{\mathrm{b}}_{it} \leq \overline{P}^{\mathrm{b}}_{it} \quad && \forall i \in \mathcal{I},\ t \in \mathcal{T},\label{eq:opfconstrBattmax}\\
        & p^{\Delta}_{it+} \leq \overline{P}^{\Delta}_{it},\ p^{\Delta}_{it-} \leq \overline{P}^{\Delta}_{it} \quad && \forall i \in \mathcal{I},\ t \in \mathcal{T},\label{eq:opfconstrBattChargeMax}\\ 
        & p^{\mathrm{b}}_{it} = p^{\mathrm{b}}_{i,t-1} + p^{\Delta}_{it+} - p^{\Delta}_{it-} \quad && \forall i \in \mathcal{I},\ t \in \mathcal{T},\label{eq:opfconstrBattSOC}\\
        & p^{\mathrm{b}}_{i0} = p^{\mathrm{b}}_{iT} \quad && \forall i \in \mathcal{I},\label{eq:opfconstrBattBoundary}\\
        & p^{\mathrm{s}}_{it} \leq P^{\mathrm{d}}_{it} \quad && \forall i \in \mathcal{I},\ t \in \mathcal{T},\label{eq:opfconstrLSMax}\\
        & p \geq 0.
    \end{align}
\end{subequations}
Constraint~\eqref{eq:opfconstrPmax} enforces an upper bound $\overline{P}^{\mathrm{g}}_{igt}$ on the power generation of type $g$, and constraint~\eqref{eq:opfconstrPTotal} bounds the total energy produced over the model horizon by parameter $\overline{H}^{\mathrm{g}}_{ig}$, which may differ from the per-period generation limit due to resource constraints, especially in the case of hydroelectric power.  Constraints~\eqref{eq:opfconstrBattmax}~and~\eqref{eq:opfconstrBattChargeMax} limit the amount of stored energy by parameter $\overline{P}^{\mathrm{b}}_{it}$ and the rate of charge and discharge by parameter $\overline{P}^{\Delta}_{it}$, respectively.  Constraint~\eqref{eq:opfconstrBattSOC} models the change in stored energy across periods, and constraint~\eqref{eq:opfconstrBattBoundary} requires that the end-of-horizon stored power is the same as in the first period, preventing initial storage from being treated as free energy.  Finally, constraint~\eqref{eq:opfconstrLSMax} requires that the load shed does not exceed the load $P^{\mathrm{d}}_{it}$.

To model transmission, we introduce variables $f_{ijt}$ for the power flow on the line from bus $i$ to bus $j$ and control variables $\theta_{it}$ for the voltage angle at bus $i$, subject to the following constraints:
\begin{subequations}
    \label{eq:opfconstrPowerFlow}
    \begin{align}
        & f_{ijt} = \frac{1}{\beta_{ij}} (\theta_{jt} - \theta_{it}) \quad && \forall (i,j) \in \mathcal{J},\ t \in \mathcal{T},\label{eq:opfconstrFlowDef}\\
        & |f_{ijt}| \leq \overline{f}_{ij} \quad && \forall (i,j) \in \mathcal{J},\ t \in \mathcal{T}\label{eq:opfconstrFlowMax}
    \end{align}
\end{subequations}
Constraint~\eqref{eq:opfconstrFlowDef} defines the flow on each line under the DC model, where $\beta_{ij}$ gives the reactance on the line from bus $i$ to bus $j$.  Constraint~\eqref{eq:opfconstrFlowMax} bounds the magnitude of the line power flows by a flow limit $\overline{f}_{ij}$.  Finally, we enforce nodal power balance:
\begin{equation}
    \label{eq:opfconstrBalance}
    \begin{aligned}
        & \mathrlap{\sum_{g \in \mathcal{G}} p^{\mathrm{g}}_{igt} + \eta^- p^{\Delta}_{it-} + p^{\mathrm{s}}_{it} + \sum_{j \in \mathcal{J}^+_i} f_{ijt}}\\
        & \quad = P^{\mathrm{d}}_{it} + \frac{1}{\eta^+} p^{\Delta}_{it+} + \sum_{j \in \mathcal{J}^-_i} f_{ijt} \quad && \forall i \in \mathcal{I},\ t \in \mathcal{T}. 
    \end{aligned}
\end{equation}
The set $\mathcal{J}^{+}_i := \{j \in \mathcal{I} \,:\, (j,i) \in \mathcal{J}\}$ gives the lines that end at bus $i$, and $\mathcal{J}^-_i$ is defined analogously for lines that start at bus $i$.  Parameters $(\eta^+,\eta^-) \subset (0,1]$ give the charging and discharging efficiency of storage, respectively.  The feasible set for base operations is $\mathcal{F}^{\mathrm{b}} := \{(p,f,\theta) \,:\, \eqref{eq:opfconstrOperation},\ \eqref{eq:opfconstrPowerFlow},\ \eqref{eq:opfconstrBalance}\}$.

The cost of base operations is due to the cost of generation, storage discharge, and load shed: 
\begin{equation}
    O^{\mathrm{b}}(p) := \sum_{t \in \mathcal{T}} \sum_{i \in \mathcal{I}} \left ( c^{\mathrm{b}} p^{\Delta}_{it-} + c^{\mathrm{s}} p^{\mathrm{s}}_{it} +  \sum_{g \in \mathcal{G}} c^{\mathrm{g}}_{g} p^{\mathrm{g}}_{igt} \right ),
\end{equation}
where $c^{\mathrm{g}}_{g}$ gives the per-unit cost of generation, $c^{\mathrm{b}}$ the per-unit cost of storage discharge, and $c^{\mathrm{s}}$ the per-unit cost of load shed.  Storage costs are due to operation and maintenance costs of pumped hydroelectric storage.

We consider a contingency where power lines $\mathcal{E} \subset \mathcal{J}$ are outaged in period $t \in \mathcal{T}$.  Although this model is limited to line outages, it can easily be generalized to generation, storage, or bus outages.  We introduce variables $p^{\mathrm{ctg, g}}_i$ and $p^{\mathrm{ctg,s}}_i$ for the post-contingency generation and load shed, respectively, and control variables $f^{\mathrm{ctg}}_{ij}$ and $\theta^{\mathrm{ctg}}_{i}$ for the post-contingency line flows and voltage angles.  In the contingency, the generation $p^{\mathrm{ctg,g}}_i$ includes the injections from generators and storage discharge, and load shed includes reductions in demand served and in the charging of storage, relative to the base solution.  The following constraints govern post-contingency operations:
\begin{subequations}
    \label{eq:ctgConstr}
    \begin{align}
        & p^{\mathrm{s}}_{it} \leq p^{\mathrm{ctg, s}}_i \leq P^{\mathrm{d}}_{it} + \frac{1}{\eta^+} p^{\Delta}_{it+} \quad && \forall i \in \mathcal{I},\label{eq:ctgconstrShed}\\
        & 0 \leq p^{\mathrm{ctg,g}}_i \leq \sum_{g \in \mathcal{G}} p^{\mathrm{g}}_{igt} + \eta^- p^{\Delta}_{it-} && \forall i \in \mathcal{I},\label{eq:ctgconstrGen}\\
        & f^{\mathrm{ctg}}_{ij} = \frac{1}{\beta_{ij}} (\theta^{\mathrm{ctg}}_{j} - \theta^{\mathrm{ctg}}_{i}) \quad && \forall (i,j) \in \mathcal{J} \setminus \mathcal{E},\label{eq:ctgconstrFlowDef}\\
        & f^{\mathrm{ctg}}_{ij} = 0 \quad && \forall (i,j) \in \mathcal{E},\label{eq:ctgconstrFlowOut}\\
        & |f^{\mathrm{ctg}}_{ij}| \leq \overline{f}_{ij} \quad && \forall (i,j) \in \mathcal{J},\label{eq:ctgconstrFlowMax}\\
        & \mathrlap{p^{\mathrm{ctg,s}}_{i} + p^{\mathrm{ctg,g}}_i + \sum_{j \in \mathcal{J}^+_i} f^{\mathrm{ctg}}_{ij}} \nonumber\\
        & \quad = P^{\mathrm{d}}_{it} + \frac{1}{\eta^+} p^{\Delta}_{it+} + \sum_{j \in \mathcal{J}^-_i} f^{\mathrm{ctg}}_{ij} \quad && \forall i \in \mathcal{I}.\label{eq:ctgconstrBalance}
    \end{align}
\end{subequations}
Constraint~\eqref{eq:ctgconstrShed} allows the post-contingency load shed to increase from the load shed in the base solution; it is bounded above by the total load and base charging of storage.  Constraint~\eqref{eq:ctgconstrGen} bounds the post-contingency injection due to generation and discharging of storage by the base injection from these devices.  The operational justification for these constraints is described in Section~\ref{sec:opf}.  Constraints \eqref{eq:ctgconstrFlowDef} and \eqref{eq:ctgconstrFlowOut} model post-contingency power flows on lines that are active and those that are outaged, respectively, and constraint~\eqref{eq:ctgconstrFlowMax} imposes the flow limit.  Constraint~\eqref{eq:ctgconstrBalance} enforces nodal power balance, where the total post-contingency injection due to generation and discharging of storage must equal the sum of load, charging of storage, and net outflow on lines, less any shedded load.  Shedding all load and setting generation to zero yields a feasible solution for the contingency, regardless of the base solution; therefore, this contingency model exhibits relatively complete recourse. We denote the post-contingency feasible set by $\mathcal{F}^{\mathrm{ctg}}_{\mathcal{E}t}(p) := \{(p^{\mathrm{ctg}},f^{\mathrm{ctg}},\theta^{\mathrm{ctg}}) \,:\, \eqref{eq:ctgConstr}\}$.  This set is written a function of the base operational decisions $p$ and parametrized by the outage set $\mathcal{E}$ and period $t$.  The constraints~\eqref{eq:ctgConstr} are linear in the post-contingency variables and in the base operational decision $p$.  The contingency objective function is due to the cost of shedding additional load, given by
\begin{equation}
    O^{\mathrm{s}}_{\mathcal{E}t}(p^{\mathrm{ctg}}) := \sum_{i \in \mathcal{I}} c^{\mathrm{s}} \left ( p^{\mathrm{ctg,s}}_i - p^{\mathrm{s}}_{it}\right ).
\end{equation}
For simplicity, we suppress the dependence of this function on the base operational decision $p$.  

Although this formulation of the DC-OPF model \eqref{DCOPF} permits simultaneous charging and discharging of storage, this behavior is limited in optimal solutions due to storage losses and the load shed cost associated with post-contingency reduction of charging.

\bibliographystyleappendix{informs2014}
\bibliographyappendix{AdversarialWildfire_appendix.bib}

\end{document}